\documentclass[12pt,a4paper]{article}

\usepackage{amssymb, amsmath, amsthm}

\usepackage[cm]{fullpage}
\usepackage[english]{babel}
\usepackage[cp1250]{inputenc}
\usepackage[T1]{fontenc}
\usepackage[pdftex]{graphicx}
\usepackage{booktabs}


\newtheorem{df}{Definition}
\newtheorem{thm}{Theorem}

\newtheorem{prop}{Proposition}

\newtheorem{cor}{Corollary} 

\title{Asymptotic analysis of internal relaxation-oscillations in a conceptual climate model}
\author{\L ukasz P\l ociniczak\thanks{Faculty of Pure and Applied Mathematics, Wroc{\l}aw University of Science and Technology, Wyb. Wyspia{\'n}skiego 27, 50-370 Wroc{\l}aw, Poland}$\;^,$\footnote{Email: lukasz.plociniczak@pwr.edu.pl}}
\date{}

\begin{document}
\maketitle
	
\begin{abstract}	
	We construct a dynamical system based on the KCG (K\"all\'en, Crafoord, Ghil) conceptual climate model which includes the ice-albedo and precipitation-temperature feedbacks. Further, we classify the stability of various critical points of the system and identify a parameter which change generates a Hopf bifurcation. This gives rise to a stable limit cycle around a physically interesting critical point. Moreover, it follows from the general theory that the periodic orbit exhibits relaxation-oscillations which are a characteristic feature of the Pleistocene ice-ages. We provide an asymptotic analysis of their behaviour and derive a formula for the period along with several estimates. They, in turn, are in a decent agreement with paleoclimatic data and are independent of any parametrization used. Whence, our simple but robust model shows that a climate may exhibit internal relaxation-oscillations without any external forcing and for a wide range of parameters.
	\\
		
	\noindent\textbf{Keywords}: dynamical system, conceptual climate model, relaxation-oscillations, matched asymptotics
\end{abstract}

\section{Introduction}
Conceptual climate models provide a feasible mean of grasping the most important mechanisms of the complex system as the climate itself \cite{Sal02,Cru12}. Focusing on only the most essential features they can help to understand the basic laws that govern the dynamics of various parameters of the climate. Of course, by construction, they are unable to predict the temperature changes in a great detail (as opposed to General Climate Models (GCMs)) but still are invaluable tool in understanding the Earth system evolution \cite{McG05}. Their virtue is that they usually are low-dimensional dynamical systems that can be understood by analytical or numerical means without the use of the supercomputers. The analytical approach has the advantage of being able to simultaneously tackle an infinite number of initial conditions providing results that can then be verified with the use of GCMs.

One of the remarkable features of the paleoclimatic record is the emergence of relaxation-oscillations of the climate \cite{Lis05,Cru12}. These asymmetrical variations of the ice sheet extent indicate that there should exist a nonlinear mechanism based on various feedbacks that can drive such dynamics. More specifically, a slow growth of the ice sheet is followed by a rapid deglaciation \cite{Lis05}. Conceptual models that try to explain this phenomena are reviewed in \cite{Cru12}. 

In the literature there is a number of approaches to conceptual climate modelling with varying complexity and descriptive variables used. This work is influenced by K\"all\'en, Crafoord and Ghil's model that analyses temperature and ice sheet evolution which essentially are the energy and mass balances \cite{Kal97,Ghi81,Ghi83}. The beginnings of formulating these balances as differential equations can be traced back to Budyko \cite{Bud69}, Sellers \cite{Sel69} and Weertman \cite{Wee76}. The Budyko-Sellers model show that there is a hysteric response of the climate to the solar forcing which indicates that the climate can abruptly enter a completely remote cold or warm state (a tipping point). On the other hand, one of the main features of the KCG model indicates that the climate can oscillate without any external (astronomical) forcing as happens in the Milankovitch theory \cite{Mil98,Ber88,Hay76}. As a matter of fact, the periodic change in Earth's axis tilt, precession and eccentricity proposed by Milankovitch to describe glacial episodes are nowadays treated as a pacemaker of the nonlinear oscillator being the climate itself \cite{De13,Tzi06}. Similar considerations lead Fowler and his collaborators to incorporate the energy and mass balances into the chemical reaction framework following CO$_2$ evolution \cite{Fow13,Fow15}. As authors show, the relaxation-oscillations of the climate can be realised as an inclusion of a mechanism responsible for formation of a progracial lakes.

Another conceptual climate framework was developed by Saltzman and collaborators \cite{Maa90}. They focus on describing the dynamics of total ice volume, CO$_2$ concentration and deep-ocean temperature. This model has been investigated for many years and the whole theory is summarized in \cite{Sal02} (but see also \cite{Eng17}). The important feature of that model is the emergence of relaxation-oscillations which describe the Pleistocene ice ages. 

More mathematically inclined considerations on conceptual climate models have been conducted by McGehee and collaborators. In \cite{McG12,Wal13} they have investigated the full version of the Budyko-Sellers model stated as a partial differential equation. If its steady state is approximated by quadratic functions the whole dynamics can be captured into a low-dimensional dynamical system analysed in \cite{McG14}. The model is constructed with a discontinuous vector field and possesses periodic orbits \cite{Wal16}.

In this work, which is a continuation of \cite{Plo18}, we investigate a generalization of the KCG model and show that for a reasonable parameter regime it predicts that the climate will undergo relaxation-oscillations. First, we state the model in its full generality and classify all its critical points according to the relative slopes of the nullclines. This is a reasonable geometrical approach since the model possesses a multitude of parameters which specify the position of the critical points. Their exact position is not necessarily needed to know as opposed to their stability or bifurcations. After investigating the general model we approximate it by focusing only on the physically most interesting critical point. It appears that for almost the whole range of the bifurcation parameter the system undergoes a relaxation-oscillation without any external forcing. With the use of matched asymptotic expansions we find the asymptotic formula for its period and amplitude. Further, we show that the former can be bounded by some elementary functions that produce very reasonable estimates on the glacial ages. The lower estimate appears to be completely independent from the particular choices of the functional forms of the albedo and accumulation/ablation. This makes it a universal bound. We close the paper with a numerical verification of our results and find that the asymptotic formula is decently accurate even for small values of the parameter. 

\section{Model statement and general results}
\subsection{Main dynamical system}
In \cite{Plo18} we have provided a thorough derivation of the generalized version of the KCG model firstly proposed in \cite{Kal97}. Here, we will just summarize the main features of the model and an interested Reader is invited to learn about all the details in the original works.

The main constituents of the KCG model are the energy and mass balances. The energy equation is of Budyko-Sellers type being a competition between incoming short-wave and outgoing long-wave radiation (OLR). Earth acquires its energy from the Sun in means of the high quality light which some part is reflected and some absorbed by the planet. The ratio of the reflected to incident energy is called the \textit{albedo} $\alpha$ and its present average value for Earth is around $0.3$ (see \cite{Fow11}). Moreover, within the naturally occurring features of the surface of our planet, the fresh snow has one of the highest values of $\alpha$ being $0.8-0.9$ while open ocean one of the lowest equalling $0.06$. Following KCG idea we will decompose albedo into two parts one representing the continents $\alpha_c$ and the other the oceans $\alpha_o$. The continental part describes the reflectivity of the land on which the ice sheet can form and advance. The oceanic albedo, in turn, depends mainly on the temperature since for cold climates the sea ice forms increasing the amount of light reflected. The variability of the albedo with respect to the amount of ice present is called the \textit{ice-albedo} feedback.  

The second ingredient of the model is the mass balance written as to describe the advances and retreats of the north hemisphere's ice sheet. This has firstly been proposed in \cite{Wee76}. Assuming zonal, i.e. longitudinal, symmetry the ice sheet's northern margin is in the Arctic ocean and the southward flow of the ice is taken to be perfectly plastic. The mass can change twofold: either by nourishment with a snowfall or by melting due to approach into the ablation zone at low latitudes. The imaginary boundary between these two regimes is called the \textit{snow line}. It essentially is the $0^\circ$ C isotherm and it can be visualized as a slanted line which height over the surface increases southward (meridionally). That is, high altitude near the equator has a comparable temperature to the low altitudes near the North pole. The accumulation and ablation regions are thus demarcated by the intersection of the snow line and the surface of the ice sheet. The connection with the energy equation comes from the fact that the rate of the snowfall depends on the temperature. Higher temperature increases evaporation of the oceans and hence increases the amount of water vapour in the atmosphere. Some of it can fall as a precipitation at higher latitudes as a snow building up the ice sheet. This mechanism is called the \textit{precipitation-temperature} feedback. 

For the neatness of the notation we will write the model in the nondimensional form the very beginning and describe what scalings have been used. Let $\theta$ be the temperature, $\lambda$ the southward ice sheet extent and $\tau$ denote the time. They are related to the dimensional values by
\begin{equation}
	\theta = \frac{4B}{Q} T, \quad \lambda = \frac{s^2}{H^2} L, \quad \tau = \frac{2}{3}\frac{ms}{H^2} t \quad \text{where} \quad H := \sqrt{\frac{4\tau_0}{3 \rho_i g}},
\end{equation}
and all appearing constants are explained in Tab. \ref{tab:Symbols}. Moreover, define the following parameters
\begin{equation}
	 \beta := -\frac{4A}{Q}, \quad \mu := \frac{3}{2} \frac{BH^2}{msc}, \quad \kappa := \frac{s h_0}{H^2}.
\end{equation}
Note that $\kappa$ can be negative and may depend on the temperature as an nondecreasing function. This is due to the fact that the height of the snow line over the Arctic Ocean $h_0$ changes according to the temperature. However, we will not specify the concrete form of $\kappa$ (in \cite{Ghi83} it is taken to be a linear function). The nondimensionalized model can now be written as follows 
\begin{equation}
\label{eqn:MainModel}
	\left\{
	\begin{array}{l}
		\dfrac{d\theta}{d\tau} = \mu \left(1-\beta-\gamma\alpha_c(\lambda)-(1-\gamma)\alpha_o(\theta)-\theta\right), \vspace{6pt}\\
		\dfrac{d\lambda}{d\tau} = \sqrt{\lambda} \left(\left(1+\xi(\theta)\right)\lambda_0(\lambda,\theta)-1\right),
	\end{array}	
	\right.
	\quad \text{for} \quad \theta,\lambda > 0,
\end{equation}
where the boundary between accumulation and ablation zones is given by
\begin{equation}
\label{eqn:IceLine}
	\lambda_0(\lambda,\theta) := \frac{1}{\lambda}\left(-\left(\kappa(\theta)+\lambda+\frac{1}{2}\right)+\sqrt{\kappa(\theta)+2\lambda+\frac{1}{4}}\right).
\end{equation}
Since $\alpha_c(\lambda)$, $\alpha_o(\theta)$ and $\xi(\theta)$ are monotone and bounded \cite{Kal97} it is useful to define the following well-known class of functions that can be used to represent those climatic quantities. 
\begin{df}
	A function $\sigma\colon\mathbb{R}\rightarrow\mathbb{R}$ is called \textbf{sigmoid} if it is bounded and differentiable with a non-negative derivative. As a normalization one can take $\lim\limits_{x\rightarrow\pm\infty} \sigma(x) = \pm 1$.
\end{df}  
Then, it is useful to take
\begin{equation}
\alpha_c(\lambda) := \frac{1}{2}\left(\alpha_0+\alpha_1 + (\alpha_0-\alpha_1)\;\sigma_{c}\left(\frac{\lambda-\lambda_\alpha}{\Delta \lambda}\right)\right), \quad \alpha_0 \leq \alpha_1,
\label{eqn:alphac}
\end{equation}
where $0\leq\alpha_{0,1}\leq 1$ are limits of $\alpha_c(\lambda)$ for $\theta\rightarrow \pm\infty$, $\lambda_\alpha$ is the translation and $\Delta \lambda$ being the steepness parameter. Similarly, we define
\begin{equation}
\alpha_o(\theta) := \frac{1}{2}\left(\alpha_+ +\alpha_- + (\alpha_+-\alpha_-)\;\sigma_{o}\left(\frac{\theta-\theta_\alpha}{\Delta \alpha}\right)\right), \quad \alpha_- \geq \alpha_+,
\label{eqn:alphao}
\end{equation}
and 
\begin{equation}
\xi(\theta) := \frac{1}{2}\left(\xi_+ +\xi_- + (\xi_+-\xi_-)\;\sigma_{\xi}\left(\frac{\theta-\theta_\xi}{\Delta \xi}\right)\right), \quad \xi_- \leq \xi_+.
\label{eqn:xi}
\end{equation}
Notice that $\alpha_c$ and $\xi$ are increasing while $\alpha_o$ is a decreasing function. These parametrizations of the climatic features are somewhat arbitrary since the physical processes that govern them are extremely complex and impossible to fully implement at the level of a conceptual model. As we will see in the sequel, many features of the relaxation-oscillations are independent of the particular choice of the sigmoid function. This makes the model more robust. Moreover, the various parameters can be tweaked in many ways to represent different climatic scenarios. It is not our aim to exactly describe reality but to show how aforementioned mechanisms give rise to an interesting dynamical behaviour which is also seen in the real world. Our choice of parameters is thus illustrative with a strong connection to the climate. 

\begin{table}
	\centering
	\begin{tabular}{rlc}
		\toprule
		Symbol & Meaning & Typical value \\
		\midrule
		$T$ & Globally averaged temperature & $-$ \\
		$L$ & Southward ice sheet extent & $-$ \\
		$t$, $\tau$ & Time variables & $-$ \\
		$Q$ & Solar constant & 1361 W m$^{-2}$ \\
		$\gamma$ & Continent to ocean area ratio & $0.3$ \\
		$A$ & Budyko constant in OLR flux & -267.96 W m$^{-2}$\\
		$B$ & Budyko constant in OLR flux & 1.74 W m$^{-2}$ K$^{-1}$\\
		$\tau_0$ & Ice sheet yield stress & 0.3$\times 10^5$ Pa \\
		$\rho_i$ & Ice density & 0.92$\times 10^3$ kg m$^{-3}$ \\
		$H$ & Ice sheet height scale & 2.1 m$^\frac{1}{2}$ \\
		$s$ & Snow line slope & 0.4 $\times 10^{-3}$\\
		$m$ & Ablation rate & $0.5$ m year$^{-1}$ \\
		$c$ & Atmosphere thermal capacity & $10^7$ J m$^{-2}$ K$^{-1}$ \\
		$h_0$ & Height of the snow line over Arctic ocean & $1.2\times 10^3$ m\\
		$\kappa$ & Nondimensional height of the snow line over Arctic ocean & 0.1 \\
		$\alpha_c$ & Continental albedo & $-$ \\
		$\alpha_o$ & Oceanic albedo & $-$ \\
		$\xi$ & Ratio of accumulation to ablation & $-$ \\
		$\lambda_0$ & The boundary between accumulation and ablation zones & $-$ \\
		$\alpha_1$, $\alpha_2$ & Parameters of the continental albedo & 0.25 and 1, respectively \\
		$\alpha_-$, $\alpha_+$ & limits of oceanic albedo & 0.6 and 0.22, respectively\\
		$\xi_-$, $\xi_+$ & limits of the ratio of accumulation and ablation & 0.1 and 0.5, respectively\\
		$\theta_\alpha$, $\Delta \alpha$ & translation and steepness parameters for oceanic albedo & 1.4 and 0.15, respectively \\
		$\theta_\xi$, $\Delta \xi$ & translation and steepness parameters for $\xi$ & 1.39 and 0.025, respectively \\
		$\sigma$ & An arbitrary sigmoid function & $-$ \\
		\bottomrule
	\end{tabular}
	\caption{Symbols used in the paper. The typical values are based on \cite{Kal97,Fow11,Fow13}.}
	\label{tab:Symbols}
\end{table}

Lastly, we would like to remark about the physical validity of the model (\ref{eqn:MainModel}). The conditions for these are the following
\begin{equation}
\label{eqn:ModelAssumptions}
\begin{split}
	\lambda_0(\lambda) \geq 0 \quad \text{(ice sheet not stagnant)}, \\
	\lambda \leq 1 \quad \text{(maximal ice sheet size)}, \\
	2\lambda + \kappa \geq 0 \quad \text{for} \quad \kappa < 0 \quad \text{(ice sheet sufficiently large)}.
\end{split}
\end{equation}
The physical derivation of the above is given in \cite{Plo18}. In the first and third of the above cases it is possible to provide different equations that build a meaningful model in these situations.

\subsection{The phase plane}
Here, we will provide a characterization of the dynamical behaviour of (\ref{eqn:MainModel}). First, denote the vector fields in (\ref{eqn:MainModel}) as
\begin{equation}
\label{eqn:Derivatives}
	F(\theta,\lambda) = \mu \left(1-\beta-\gamma\alpha_c(\lambda)-(1-\gamma)\alpha_o(\theta)-\theta\right), \quad G(\theta,\lambda) = \sqrt{\lambda} \left(\left(1+\xi(\theta)\right)\lambda_0(\lambda)-1\right).
\end{equation}
Computing the derivatives we have
\begin{equation}
\begin{split}
	\frac{\partial F}{\partial \theta} &= -\mu\left((1-\gamma)\frac{d\alpha_o}{d\theta}+1\right), \quad \frac{\partial F}{d\lambda} = -\mu\gamma \frac{d\alpha_c}{d \lambda}, \\
	\frac{\partial G}{\partial \theta} &= \sqrt{\lambda}\left(\frac{d\xi}{d\theta}\lambda_0 + (1+\xi)\frac{\partial\lambda_0}{\partial \theta}\right), \quad \frac{\partial G}{\partial \lambda} = \frac{1}{2\sqrt{\lambda}}\left(\left(1+\xi\right)\lambda_0-1\right)+\sqrt{\lambda} (1+\xi)\frac{\partial \lambda_0}{\partial\lambda},
\end{split}
\end{equation}
where we have suppressed the explicit writing of arguments for the brevity of notation. Further, the derivatives of the snow line $\lambda_0$ have the form
\begin{equation}
	\frac{\partial \lambda_0}{\partial \theta} = -\frac{1}{\lambda} \frac{d\kappa}{d\theta} \left(-1+\frac{1}{2\sqrt{\kappa+2\lambda+\frac{1}{4}}}\right), \quad \frac{\partial \lambda_0}{\partial \lambda} = \frac{1}{\lambda^2} \left(\kappa+\frac{1}{2}+\frac{\kappa+\lambda+\frac{1}{4}}{\sqrt{\kappa+2\lambda+\frac{1}{4}}}\right).
\end{equation}
Note that thanks to the third assumption in (\ref{eqn:ModelAssumptions}) the function $\lambda_0$ is $\theta$-nonincreasing. On the other hand, by the analysis of $d\lambda_0/d\lambda$ we see that for fixed $\kappa$ the function $\lambda_0(\cdot,\kappa)$ has a global maximum for $\lambda=\kappa(1+4\kappa)/2$ when $\kappa \geq 0$ and $\lambda = -\kappa/2$ when $\kappa<0$. 

The nullclines $h$ and $k$ of $\theta$ and $\lambda$ respectively, can be readily computed in a closed form giving
\begin{equation}
\label{eqn:NullclinesFull}
\begin{split}
	h(\theta) &= \alpha_c^{-1}\left(-\frac{1}{\gamma}\left((1-\gamma)\alpha_o(\theta)+\theta+\beta-1\right)\right), \\
	k^\pm(\theta) &= \frac{1}{2}\frac{(1+\xi)\xi}{(2+\xi)^2} \left(1-2\left(1+\frac{2}{\xi}\right)\kappa\pm\sqrt{1-4\left(1+\frac{2}{\xi}\right)\kappa}\right).
\end{split}
\end{equation}
The continental albedo is a monotone function of $\theta$ hence it has a well-defined inverse. Moreover, the $\lambda$-nullcline is a solution of the quadratic equation 
\begin{equation}
\label{eqn:NullclineCondition}
	\lambda_0 = \frac{1}{1+\xi},
\end{equation}
hence it has two branches denoted by $\pm$. They join at the common point for $\lambda = \kappa(1+4\kappa)/2$ when $\kappa\geq 0$ or $\lambda=-\kappa/2$ when $\kappa<0$. At that point $k^\pm$ has a singular derivative. Obviously, $k^+ \geq k^-$ and the sufficeint and necessary condition for they to exist is
\begin{equation}
\label{eqn:KappaAssumption}
\kappa \leq \frac{1}{4}\frac{\xi}{2+\xi}.
\end{equation}
since then the square root in (\ref{eqn:NullclinesFull}) is real. Inverting the above yields an important result that no ice sheet can exist for sufficiently high temperatures \cite{Wee76}.

The shape of the $\lambda$-nullcline depends on the particular form of $\kappa$. In \cite{Plo18} we have shown that $k^-(\theta) = O(\kappa^2)$ when $\kappa\rightarrow 0$ hence it becomes physically irrelevant as soon as the snow line is low enough. On the other hand, the other branch $k^+$ converges in that case to $(1+\xi)\xi/(2+\xi)^2$ which is an increasing function. This is physically the most important branch at which the critical points can be situated. We will come back to this issue in the next section. 

Since $\alpha_c^{-1}$ is increasing, the overall shape of the $\theta$-nullcline is not distorted by a composition with it. It is easy to observe that since $\alpha_o$ is a sigmoid function it is almost constant for arguments far to the left or far to the right of $\theta_\alpha$. In these regions, the dominant contribution comes from the decreasing linear function in the definition of $h$ in (\ref{eqn:NullclinesFull}). On the other hand, in the vicinity of $\theta_\alpha$ the sigmoid dominates making the whole graph of $h$ to resemble the S-shape. More precisely, computing the derivative we have
\begin{equation}
\label{eqn:NullclineDerivative}
	\frac{dh}{d\theta} = -\frac{1}{\gamma\alpha_c'}\left((1-\gamma)\frac{d\alpha_o}{d\theta}+1\right).
\end{equation}
Wee see that the above has two zeros only if the equation $(1-\gamma)\alpha_o'=-1$ has two solutions. Since the derivative of a sigmoid function has a single maximum which height is determined by the steepness parameter $\Delta\alpha$, the nullcline $h$ has two extrema for sufficiently small $\Delta\alpha$. Therefore, we further assume that
\begin{equation}
\label{eqn:AlphaAssumption}
	\Delta\alpha \text{ small enough for } \frac{dh}{d\theta}=0 \text{ to have two solutions } \theta_-<\theta_+.
\end{equation}
Whence, $h$ increases on $(\theta_-,\theta_+)$ and decreases otherwise. Now, we are ready to state the stability result.

\begin{thm}
Assume (\ref{eqn:KappaAssumption}) and (\ref{eqn:AlphaAssumption}). If $(\theta_c,\lambda_c)$ is a fixed critical point of the system (\ref{eqn:MainModel}) then the following holds. 
\begin{itemize}
	\item If $(\theta_c,\lambda_c)$ lies on $k^-$ then it is unstable.
	\item If $(\theta_c,\lambda_c)$ lies on $k^+$ then
	\begin{itemize}
		\item if $h$ is decreasing at that point then it is stable iff $dh/d\theta < dk^+/d\theta$,
		\item if $h$ is increasing at that point then it becomes unstable when $\mu$ increases through $\mu_c$. Moreover, if $dh/d\theta > dk^+/d\theta$ the Hopf bifrucation occurs. Here,
		\begin{equation}
			\mu_c = -\sqrt{\lambda} \left(1+\xi\right) \dfrac{d \lambda_0}{\partial \lambda} \left((1-\gamma)\dfrac{d\alpha_o}{d\theta}+1\right)^{-1} \quad \text{at} \quad (\theta_c,\lambda_c).
		\end{equation}
	\end{itemize}
\end{itemize}
\end{thm}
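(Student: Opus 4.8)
The plan is to linearize (\ref{eqn:MainModel}) at the critical point and read off its stability from the Routh--Hurwitz conditions, after using the geometric hypotheses to fix the signs of the Jacobian entries. Write $J$ for the Jacobian of $(F,G)$ at $(\theta_c,\lambda_c)$. The first step is the remark that on the nullcline $G=0$ (with $\lambda>0$) one has $(1+\xi)\lambda_0=1$, so the bracket $\tfrac{1}{2\sqrt\lambda}\big((1+\xi)\lambda_0-1\big)$ in $\partial G/\partial\lambda$ vanishes and hence $\partial G/\partial\lambda=\sqrt{\lambda_c}\,(1+\xi(\theta_c))\,\partial\lambda_0/\partial\lambda$ at the critical point; in particular $\operatorname{sgn}(\partial G/\partial\lambda)=\operatorname{sgn}(\partial\lambda_0/\partial\lambda)$ there. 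Since $\lambda\mapsto\lambda_0$ increases up to its global maximum and decreases afterwards, and the two branches $k^\pm$ meet exactly at that maximum with $k^+\ge k^-$, this yields $\partial G/\partial\lambda>0$ on $k^-$ and $\partial G/\partial\lambda<0$ on $k^+$. Alongside, $\partial F/\partial\lambda=-\mu\gamma\,\alpha_c'\le 0$ (and $<0$ wherever $h$ is a smooth graph, i.e.\ $\alpha_c'>0$), and by (\ref{eqn:NullclineDerivative}) with $\alpha_c'\ge 0$ the sign of $\partial F/\partial\theta=-\mu\big((1-\gamma)\alpha_o'+1\big)$ is opposite to that of $dh/d\theta$, so $\partial F/\partial\theta>0$ where $h$ increases and $\partial F/\partial\theta<0$ where $h$ decreases.

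Next I would express the determinant through the nullcline slopes. Implicit differentiation of $F(\theta,h(\theta))=0$ and $G(\theta,k^\pm(\theta))=0$ gives $dh/d\theta=-F_\theta/F_\lambda$ and $dk^\pm/d\theta=-G_\theta/G_\lambda$, hence
\begin{equation}
\det J = F_\theta G_\lambda - F_\lambda G_\theta = F_\lambda\,G_\lambda\left(\frac{dk^\pm}{d\theta}-\frac{dh}{d\theta}\right),\qquad \operatorname{tr}J = F_\theta + G_\lambda .
\end{equation}
By Routh--Hurwitz the critical point is asymptotically stable iff $\operatorname{tr}J<0$ and $\det J>0$, and it is unstable as soon as either fails. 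The branch statements then follow by sign bookkeeping. On $k^-$ we have $G_\lambda>0$: if $h$ increases there then $F_\theta>0$, so $\operatorname{tr}J>0$ and the point is unstable; if $h$ decreases there, one uses $F_\lambda G_\lambda<0$ together with the sign of $dk^-/d\theta-dh/d\theta$ to get $\det J<0$, i.e.\ a saddle. On $k^+$ we have $G_\lambda<0$: if $h$ decreases there then $F_\theta<0$, so $\operatorname{tr}J<0$ automatically and stability is decided solely by $\det J=F_\lambda G_\lambda\,(dk^+/d\theta-dh/d\theta)$ with $F_\lambda G_\lambda>0$, which is the stated slope comparison; if $h$ increases there, then $F_\theta$ is positive and proportional to $\mu$ while $G_\lambda<0$ does not depend on $\mu$, so $\operatorname{tr}J<0$ for small $\mu$ and changes sign as $\mu$ crosses the unique root $\mu_c$ of $F_\theta+G_\lambda=0$; solving that equation for $\mu$ produces the displayed formula for $\mu_c$, and for $\mu>\mu_c$ the point is unstable.

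Finally I would upgrade the vanishing of the trace at $\mu=\mu_c$ to a genuine Hopf bifurcation. The key point is that the critical points are the intersections of $h$ with $k^\pm$, and neither nullcline involves $\mu$, so $(\theta_c,\lambda_c)$ stays fixed as $\mu$ varies; one therefore only needs $J(\mu_c)$ to have a conjugate pair of purely imaginary eigenvalues, i.e.\ $\det J(\mu_c)>0$ (precisely the relative-slope condition between $h$ and $k^+$), together with the transversality condition, which is immediate since $\tfrac{d}{d\mu}\operatorname{tr}J=-\big((1-\gamma)\alpha_o'+1\big)\ne 0$ in the $h$-increasing regime. I expect the genuine obstacle to be the $k^-$, $h$-decreasing sub-case: there neither the trace nor the determinant is obviously signed, so one must exploit the explicit form of $k^-$ in (\ref{eqn:NullclinesFull}) together with the monotonicity of $\xi$ and $\kappa$ to pin down the sign of $dk^-/d\theta-dh/d\theta$. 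The rest is routine sign-chasing.
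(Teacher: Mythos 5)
Your proposal is correct and follows essentially the same route as the paper: linearization at the critical point, expressing $\det J$ as $F_\lambda G_\lambda\left(\frac{dk^\pm}{d\theta}-\frac{dh}{d\theta}\right)$ via implicit differentiation of the nullclines, sign bookkeeping based on $\partial\lambda_0/\partial\lambda<0$ on $k^+$ and $>0$ on $k^-$, and transversality for the Hopf bifurcation from the linearity of $\operatorname{tr}J$ in $\mu$. The one sub-case you flag as the genuine obstacle ($k^-$ with $h$ decreasing) is dispatched in the paper not by explicit computation with the formula for $k^-$ but by the geometric observation that $dh/d\theta>dk^-/d\theta$ is impossible at such an intersection, which immediately gives $\det J<0$ there.
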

\begin{proof}
The stability of a particular critical point is determined from the eigenvalues $r_\pm$ of the Jacobi Matrix of the partial derivatives of $F$ and $G$ evaluated at the critical point $(\theta_c,\lambda_c)$. Denote this matrix by $J$ and then by elementary formulas we have
\begin{equation}
\label{eqn:Eigenvalues}
	r_\pm = \frac{1}{2}\left(\text{tr} J \pm \sqrt{\left(\text{tr} J\right)^2 -4\text{det} J}\right).
\end{equation}
Computing the trace and determinant at the \textit{critical point} we obtain
\begin{equation}
\label{eqn:TrDet}
\begin{split}
	\text{tr}J (\mu) &= -\mu \left((1-\gamma)\dfrac{d\alpha_o}{d\theta}+1\right) + \sqrt{\lambda} \left(1+\xi\right) \dfrac{d \lambda_0}{\partial \lambda}, \vspace{12 pt}\\
	\text{det} J (\mu)&=  \mu \gamma \sqrt{\lambda} \left(1+\xi\right)\frac{d\alpha_c}{\partial \lambda}  \dfrac{\partial \lambda_0}{\partial \lambda} \left(\dfrac{dh}{d\theta}-\dfrac{dk^\pm}{d\theta}\right)
\end{split}
\quad \text{at} \quad  (\theta,\lambda)=(\theta_c,\lambda_c),
\end{equation}
where we have used the Inverse Function Theorem to note that the derivatives of the nullclines are given by
\begin{equation}
\begin{split}
	\frac{dh}{d\theta} &= - \frac{\partial F}{\partial \theta} \left(\frac{\partial F}{\partial \lambda}\right)^{-1} = -\frac{1}{\gamma\alpha_c'}\left((1-\gamma)\frac{d\alpha_o}{d\theta}+1\right), \\
	\frac{dk^\pm}{d\theta} &= - \frac{\partial G}{\partial \theta} \left(\frac{\partial G}{\partial \lambda}\right)^{-1} = -\frac{1}{\lambda_0} \frac{\lambda_0^2\frac{dk}{d\theta}+\frac{\partial\lambda_0}{\partial \theta}}{(1+\xi)\frac{\partial \lambda_0}{\partial \lambda}}
\end{split}
\quad \text{at} \quad  (\theta,\lambda)=(\theta_c,\lambda_c).
\end{equation}
Notice that since $\lambda_0(\cdot,\kappa)$ has a maximum for fixed $\kappa$ and $\lambda$ on the upper branch of the nullcline, we have $\partial\lambda_0/\partial\lambda <0$ on $k^+$ and vice-versa.  

Assume now that the critical point lies on the upper branch $k^+$. If $h$ is decreasing at that point, we have $\partial F/\partial \theta < 0$ and $\partial\lambda_0/\partial\lambda <0$. Therefore, $\text{tr} J < 0$ for every $\mu>0$. On the other hand, the square root in the determinant is pure real or imaginary according to the sign of the following quadratic
\begin{equation}
	\left(\text{tr} J (\mu)\right)^2 -4\text{det} J(\mu).
\end{equation}
Although the exact zeros of the above can be readily computed, they are not needed for the stability result. For if $\mu$ is chosen to yield a real square root we have $r_- < 0$ and 
\begin{equation}
	r_+ = \frac{2\text{det} J}{\text{tr} J - \sqrt{\left(\text{tr} J\right)^2-4 \text{det} J}},
\end{equation}
which due to (\ref{eqn:Derivatives}) and (\ref{eqn:TrDet}) has its sign governed by the relative slope of the nullclines at the critical point. Therefore, by the Hartman-Grobman's Theorem the critical point is stable if $\det J >0$ which is $dh/d\theta < dk^+/d\theta$ and we have a node. Moreover, if the slope of $h$ is larger than this of $k^+$ the critical point is a saddle. On the other hand, in the case when the square root in $\det J(\mu)$ is imaginary, the critical point is always stable because $\text{Re}\, r_\pm = \text{tr} J (\mu)/ 2 < 0$ (a focus).

Next, assume that at the critical point $h$ is increasing and consider the upper branch of $\lambda$-nullcline. Now, since $\partial F/\partial \theta >0$ the trace is positive for $\mu>\mu_c$. The reasoning essentially the same as above applies to that case yielding a change of stability at $\mu_c$. When $dh/d\theta > dk^+/d\theta$ at the critical point we have $\text{tr} J(\mu_c) = 0$ and the square root in (\ref{eqn:Eigenvalues}) is purely imaginary. Therefore, as $\mu$ increases through $\mu_c$ the eigenvalues cross the imaginary axis on the complex plane with a non-zero speed because $\text{tr} J(\mu)$ is a linear function. The Hopf Bifurcation Theorem \cite{Per13} yields the result. 

Completely analogous reasoning can be applied to the lower branch $k^-$ and we omit the details. The important thing to keep in mind is that we cannot have $dh/d\theta > dk^-/d\theta$ at the critical point when $h$ is decreasing. For then the nullclines cannot intersect and there is no critical point. A careful sign counting helps to see that the lower branch is unstable. The proof is now complete. 
\end{proof}

We have thus seen that since the lower branch of the $\lambda$-nullcline is completely unstable, the physically meaningful phenomena can happen only on $k^+$. Now, we will elaborate on this particular case.

\section{Relaxation-oscillations}

\subsection{Model simplification}
In order to facilitate the analysis of relaxation-oscillations present in (\ref{eqn:MainModel}) we make several simplifying and reasonable assumptions. First, since the nondimensional snow line height at the Arctic is usually small \cite{Fow13} we take the first order approximation with $\kappa = 0$. As we have mentioned above, this forces the lower branch of the $\lambda$-nullcline to vanish leaving the upper one as a increasing function of $\theta$. Perhaps, taking a static snow line is not completely physically feasible but treating it as a first approximation is justified when the variations of the temperature are small. Therefore, our subsequent considerations can be thought as meaningful only when we are investigating vicinity of the critical point representing our climate. The model is not suitable for describing large excursions from it. 

As was also shown in \cite{Plo18}, the ice sheet extent $\lambda$ is a small number. Specifically, we always have $\lambda < 2/9$ but in reality the upper bound is even smaller. In that case the snow line can be expanded into Taylor series for $\kappa = 0$
\begin{equation}
	\lambda_0(\lambda) = 1-4\lambda + O(\lambda^2) \quad \text{as} \quad \lambda\rightarrow 0^+.
\end{equation}
Moreover, from the definition of $\xi$ and the model data in Tab. \ref{tab:Symbols} we can see that its magnitude is small letting us to approximate even further
\begin{equation}
\label{eqn:Lambda0Simp}
	\left(1+\xi(\theta)\right)\lambda_0(\lambda) -1 = 1-4\lambda+\xi(\theta) -1 + O\left(\lambda^2\right) + O\left(\xi\lambda^2\right) = \xi(\theta) - 4\lambda + O\left(\lambda^2\right) + O\left(\xi\lambda^2\right),
\end{equation}
as $\lambda\rightarrow 0^+$ and $\xi \rightarrow 0^+$. Further, we take the simplest physically sensible form of the continental albedo
\begin{equation}
	\alpha_c(\lambda) = \alpha_0 + \alpha_1 \lambda.
\end{equation}
Although it is not a sigmoid function, the small variations in it allow us to use the linear approximation. This is consistent with the assumption that $\kappa = 0$. Now, we can write (\ref{eqn:MainModel}) as 
\begin{equation}
	\left\{
	\begin{array}{l}
	\dfrac{d\theta}{d\tau} = \mu \left(1-\beta-\gamma\alpha_0-\dfrac{1}{2}(1-\gamma)(\alpha_-+\alpha_+)-\gamma\alpha_1\lambda+\dfrac{1}{2}(1-\gamma)(\alpha_--\alpha_+)\sigma\left(\dfrac{\theta-\theta_\alpha}{\Delta\alpha}\right)-\theta\right), \vspace{6pt}\\
	\dfrac{d\lambda}{d\tau} = \dfrac{1}{2}(\xi_-+\xi_+)\sqrt{\lambda} \left(1+\dfrac{\xi_+-\xi_-}{\xi_-+\xi_+}\sigma\left(\dfrac{\theta-\theta_\xi}{\Delta\xi}\right)-\dfrac{8}{\xi_-+\xi_+}\lambda + O\left(\lambda^2\right)+O(\xi(\theta)\lambda^2)\right),
	\end{array}	
	\right.
\end{equation}
as $\lambda\rightarrow 0^+$ and $\xi \rightarrow 0^+$. If we neglect the higher order terms, define the following
\begin{equation}
\begin{split}
	a = 1-\beta-\gamma\alpha_0-\dfrac{1}{2}(1-\gamma)(\alpha_-+\alpha_+), \quad b = \frac{1}{8}\gamma\alpha_1 (\xi_-+\xi_+), \\
	c = \frac{1}{2}(1-\gamma)(\alpha_--\alpha_+), \quad d = \frac{\xi_+-\xi_-}{\xi_-+\xi_+}, \quad \nu =   \frac{1}{8}\sqrt{2(\xi_-+\xi_+)}\gamma\alpha_1\mu, \quad x_\alpha = \theta_\alpha, \quad x_\xi = \theta_\xi,
\end{split}
\end{equation}
and rescale the variables by
\begin{equation}
	x = \theta, \quad y = \frac{8}{\xi_-+\xi_+} \lambda, \quad t = \sqrt{2\left(\xi_-+\xi_+\right)}\tau,
\end{equation}
we obtain the following dynamical system
\begin{equation}
\label{eqn:MainSystem}
\left\{
\begin{array}{l}
	\dfrac{dx}{dt} = \nu\left(f(x)-y\right), \vspace{6pt}\\
	\dfrac{dy}{dt} = \sqrt{y}\left(g(x)-y\right),
\end{array}	
\right.
\end{equation}
with the nullclines
\begin{equation}
\label{eqn:Nullclines}
	f(x) = \frac{1}{b} \left(a+c\sigma\left(\dfrac{x-x_\alpha}{\Delta\alpha}\right)-x\right), \quad g(x) = 1 + d \sigma\left(\dfrac{x-x_\xi}{\Delta\xi}\right).
\end{equation}
Notice that now $x,y$ and $t$ are of order of unity. For simplicity and without any lose of generality we have taken the sigmoid functions $\sigma$ to be of the same family.

We have completed the derivation of the simplified model of climate under the assumption the the variations of the snow line are small enough for us to take $\kappa=0$. We can think of this description as a magnification of the vicinity of the present climate oscillations. 

\begin{figure}
	\centering
	\includegraphics[scale=0.9]{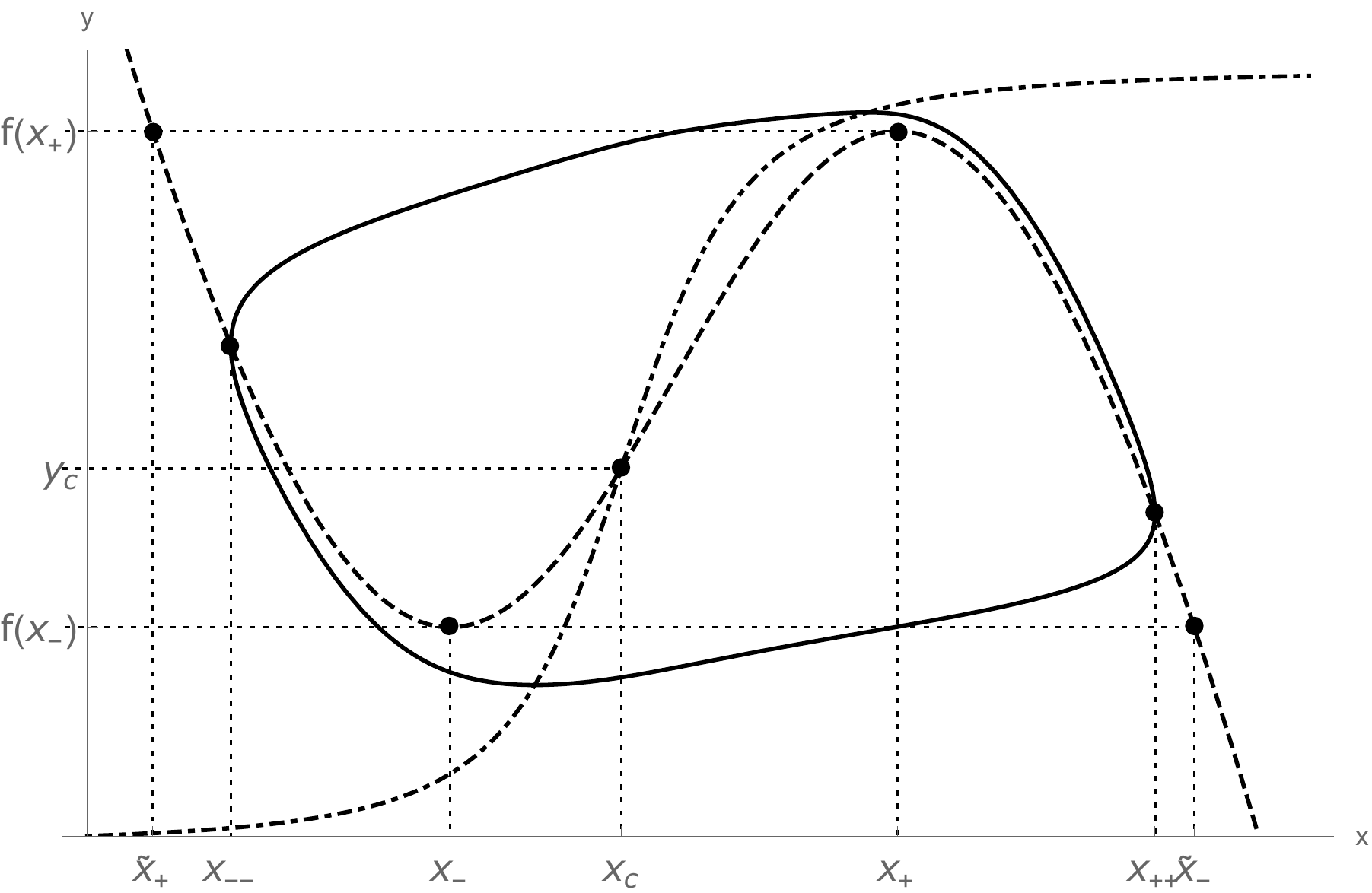}
	\caption{The phase plane of the system (\ref{eqn:MainSystem}). The solid line indicates the limit cycle; dashed line depicts the nullcline $f$; dot-dashed line is the graph of $g$. Various points are defined in (\ref{eqn:Nullclines}), (\ref{eqn:Extrema}), (\ref{eqn:ExtremaExpanded}), and (\ref{eqn:x++}). }
	\label{fig:PhasePlane}
\end{figure}

It is easy to note that $g$ is an increasing sigmoid function while $f$ has a typical S-shape having one minimum and one maximum (see Fig. \ref{fig:PhasePlane}) given by
\begin{equation}
\label{eqn:Extrema}
	x_\pm = x_\alpha \pm \Delta\alpha \left(\sigma'\right)^{-1}\left(\frac{\Delta\alpha}{c}\right) , \quad f(x_\pm) =\frac{1}{b}\left(a\pm c\sigma\left(\left(\sigma'\right)^{-1}\left(\frac{\Delta\alpha}{c}\right)\right)-x_\pm\right).
\end{equation}
It is also convenient to decompose $f$ into three branches
\begin{equation}
f = f_{s-} \cup f_u \cup f_{s+},
\end{equation}
where subscripts denote the stable and unstable parts of $f$. On their respective domains the branches are monotone and hence invertible. More precisely
\begin{equation}
	f_{s-} = f|_{(-\infty,x_-]}, \quad f_{u} = f|_{[x_-,x_+]}, \quad f_{s+} = f|_{(x_+,\infty)},
\end{equation} 
where the vertical bars indicate the restriction of the domain. Moreover, for future convenience we define the following points 
\begin{equation}
\label{eqn:ExtremaExpanded}
	\widetilde{x}_\pm = f_{s\mp}^{-1}\left(f(x_\pm)\right).
\end{equation} 

Before we move to the analysis we have to make several assumptions concerning the parameters involved. Specifically, in order for the relaxation-oscillations to occur there should exist a unique unstable critical point which we denote by $x_c$. It suffices then to make the following assumptions.
\begin{align}\label{eqn:Assumptions}
	f(x_-) > g(x_-) \quad \text{and} \quad f(x_+) < g(x_+) \quad (\text{only one critical point})&, \\
	g'(x_c) > f'(x_c) > 0 \quad (\text{the critical point is unstable})&, \\
	\nu > \nu_c \quad (\text{existence of the limit cycle}),
\end{align}
where the stability result can be proved essentially in the same way as in the previous section while the critical parameter is given by 
\begin{equation}
	\nu_c = \frac{3}{2} \frac{\sqrt{f(x_c)}}{f'(x_c)}.
\end{equation}
To get a quick peek on the reasonable size of the critical parameter we can make a crude estimate that a ice sheet oscillates between $70^\circ$ and $45^\circ$ of latitude when the temperature changes by at most $20^\circ$C (this is an overestimate). Then, assuming that $x_\pm$ bounds the amplitude of the oscillations and approximating $f'(x_c)$ by its finite difference we can estimate that $\nu_c \approx 0.15$. This can be considered as a small quantity. Conducting the exact calculations with our data it occurs that $\nu_c = 0.1$.  

Now, we can make some simple geometrical observations. From the Fig. \ref{fig:PhasePlane} we can see that for a physically reasonable parameter choice the function $g$ is almost constant for $x<x_-$ and $x > x_+$. Moreover, near $x_\xi$ it is well-approximated by a linear function. The latter property holds also for $f$ near $x_\alpha$. These observations can help us to find a reasonable approximation of the critical point.

\begin{prop}\label{prop:CriticalPoint}
Let (\ref{eqn:Assumptions}) be satisfied. Then 
\begin{equation}
	\left|x_c - \frac{g(x_\xi)-f(x_\xi)+g'(x_\xi)x_\xi-f'(x_\alpha)x_\alpha}{g'(x_\xi) - f'(x_\alpha)}\right| \leq C (x_+-x_-)^3,
\end{equation}
which for $x_\alpha=x_\xi=x_0$ can be written as
\begin{equation}
	\left|x_c - x_0 - \frac{g(x_\xi)-f(x_\xi)}{g'(x_\xi) - f'(x_\xi)}\right| \leq C (x_+-x_-)^3.
\end{equation}
where $C = \frac{1}{3} \max \left\{|f'''(x)|,|g'''(x)|:x\in[x_-,x_+]\right\}$.
\end{prop}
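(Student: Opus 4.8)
The plan is to view $x_c$ as the unique zero of $\phi(x):=f(x)-g(x)$ on $[x_-,x_+]$ and the subtracted quantity as the zero $\hat x_c$ of the \emph{linearised} equation in which $f$ is replaced by its tangent at $x_\alpha$ and $g$ by its tangent at $x_\xi$. The reason this turns out to be an order-$3$ approximation, rather than the naive order $2$, is structural: the shifted sigmoids $\sigma((x-x_\alpha)/\Delta\alpha)$ and $\sigma((x-x_\xi)/\Delta\xi)$ are centred precisely at $x_\alpha$ and $x_\xi$, and a sigmoid has an inflection at its centre (its derivative attains there its maximum, so $\sigma''(0)=0$). Hence $f''(x_\alpha)=\tfrac{c}{b\Delta\alpha^2}\sigma''(0)=0$ and $g''(x_\xi)=\tfrac{d}{\Delta\xi^2}\sigma''(0)=0$, and Taylor's theorem with Lagrange remainder gives $f(x)=f(x_\alpha)+f'(x_\alpha)(x-x_\alpha)+\tfrac16 f'''(\zeta_1)(x-x_\alpha)^3$ and $g(x)=g(x_\xi)+g'(x_\xi)(x-x_\xi)+\tfrac16 g'''(\zeta_2)(x-x_\xi)^3$ with \emph{no} quadratic term.

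First I would localise $x_c$. The sign hypotheses in (\ref{eqn:Assumptions}) read $\phi(x_-)=f(x_-)-g(x_-)>0$ and $\phi(x_+)=f(x_+)-g(x_+)<0$, so by the intermediate value theorem $\phi$ vanishes in $(x_-,x_+)$, and uniqueness of the critical point identifies this zero with $x_c$; moreover $x_\alpha$ is the midpoint of $[x_-,x_+]$ by (\ref{eqn:Extrema}) (and $x_\xi\in[x_-,x_+]$ in the regime of interest, in particular in the symmetric case $x_\alpha=x_\xi=x_0$ of the second display). Consequently $|x_c-x_\alpha|\le x_+-x_-$, $|x_c-x_\xi|\le x_+-x_-$, and the intermediate points $\zeta_1,\zeta_2$ lie in $[x_-,x_+]$.

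Next I would substitute $x=x_c$ into $f(x_c)=g(x_c)$ using the two expansions. Collecting the constants and the coefficients of $x_c$ yields an affine equation $A+Bx_c+E=0$, where $B=f'(x_\alpha)-g'(x_\xi)\neq0$ by transversality of the linearised nullclines (which holds in the parameter range considered, and is implied by the non-degeneracy $g'(x_c)>f'(x_c)$ of (\ref{eqn:Assumptions}) when $x_c$ is close to $x_\alpha$ and $x_\xi$), and $E=\tfrac16 f'''(\zeta_1)(x_c-x_\alpha)^3-\tfrac16 g'''(\zeta_2)(x_c-x_\xi)^3$. Solving gives $x_c=-A/B+E/B$, where $-A/B$ coincides with the quantity subtracted from $x_c$ in the statement (in the symmetric case $x_\alpha=x_\xi=x_0$ it reduces to the second display). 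Finally one bounds $|E|\le \tfrac16\bigl(\max_{[x_-,x_+]}|f'''|+\max_{[x_-,x_+]}|g'''|\bigr)(x_+-x_-)^3\le \tfrac13\max\{\max|f'''|,\max|g'''|\}(x_+-x_-)^3$, and dividing by $|B|$ gives $|x_c-\hat x_c|\le C(x_+-x_-)^3$ after absorbing the bounded factor $1/|B|$ into the constant.

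The main obstacle is about the constant, not the mechanism. One part is justifying $f''(x_\alpha)=g''(x_\xi)=0$: the definition of \emph{sigmoid} in the excerpt only demands a bounded, nondecreasing, differentiable function, so one must either adopt the symmetric/inflection normalisation (equivalently $\sigma\in C^3$ with $\sigma''(0)=0$) or forfeit the cubic rate in favour of a quadratic one. The other part is the division by $B=f'(x_\alpha)-g'(x_\xi)$: to land exactly on $C=\tfrac13\max\{|f'''|,|g'''|\}$ one needs $|B|\ge1$, which holds comfortably in the parameter range considered but should strictly be carried as a hypothesis or inside $C$. Everything else — the Taylor arithmetic, the explicit form of $\hat x_c$, and the specialisation to $x_\alpha=x_\xi$ — is routine.
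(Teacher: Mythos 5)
Your proposal follows essentially the same route as the paper: Taylor-expand $f$ and $g$ to third order about $x_\alpha$ and $x_\xi$ (using that the quadratic terms vanish at the sigmoids' centres), equate the expansions to get an affine equation for $x_c$ whose solution is the stated approximant plus a cubic remainder, and bound that remainder by $C(x_+-x_-)^3$. The two caveats you flag --- that $f''(x_\alpha)=g''(x_\xi)=0$ requires an inflection normalisation of $\sigma$ beyond the paper's bare definition of sigmoid, and that the factor $1/\left|g'(x_\xi)-f'(x_\alpha)\right|$ must either be bounded by $1$ or absorbed into the constant (the paper silently drops it when estimating $R(x_c)$) --- are genuine points that the paper's own proof also leaves implicit.
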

\begin{proof}
The proof goes by straightforward calculations. First, observe that if we expand $f$ and $g$ in Taylor series at the inflection points $x_\alpha$ and $x_\xi$ we have
\begin{align}
	f(x) &= f(x_\alpha) + f'(x_\alpha) (x-x_\alpha) + \frac{1}{6} f'''(\widetilde{x}_\alpha) (x-x_\alpha)^3, \\
	g(x) &= g(x_\xi) + g'(x_\xi) (x-x_\xi) + \frac{1}{6} g'''(\widetilde{x}_\xi) (x-x_\xi)^3.
\end{align}
Equalling both expansions we obtain
\begin{equation}
	x_c = \frac{1}{g'(x_\xi) - f'(x_\alpha)}\left(g(x_\xi)-f(x_\xi)+g'(x_\xi)x_\xi-f'(x_\alpha)x_\alpha\right) + R(x_c).
\end{equation}
For the remainder it holds
\begin{equation}
	\left|R(x_c)\right| \leq \frac{C}{2}\left(\left|x_c-x_\alpha\right|^3 + \left|x_c-x_\xi\right|^3\right) \leq C (x_+-x_-)^3,
\end{equation}
for the constant $C$ defined in the assertion. This finishes the proof.
\end{proof}
In practice the above approximation can be decently accurate and for our data yields an error of $3\times 10^{-4}$ which is smaller than necessary in this application. 

\subsection{Matched asymptotic analysis}

Next, we proceed to the main topic. By using the general theory \cite{Mis13,Kru01} it follows that the dynamical system (\ref{eqn:MainSystem}) possesses a stable limit cycle for all values of $\nu\geq \nu_c = 0.1$. Our general considerations from previous section indicate that the Hopf bifurcation occurs at $\nu_c$ and a cycle is born with a period of $200\times 10^3$ years which is twice the data-estimated value of the Pleistocene ice age oscillations. This indicates that, if the model is correct, $\nu$ should be larger than $\nu_c$. With the increase of $\nu$ the amplitude of oscillations increases until they become relaxation-oscillations. It means that the climate oscillates for almost the whole parameter range, that is the relaxation-oscillations are a robust phenomenon. Below we will use the matched asymptotic analysis to find their leading order behaviour. These results can be compared with the Van der Pol oscillator (see \cite{Kev13} for construction of the approximation and \cite{Mac90} for a rigorous justification). The following method borrows from \cite{Oma13} while in \cite{Nip88} an algorithimc approach has been developed with rigorous proofs of convergence. A simplified version that captures the quintessence of our calculations is described in \cite{Hol12}. 

We will approximate the solution of (\ref{eqn:MainSystem}) is several regions of the phase plane and the following material will be organized according to the particular layer. In order to track the progress it is helpful to consult Fig. \ref{fig:PhasePlane}. Moreover, as a notational convenience we will retain the same letter for denoting the exact solution with its leading order approximation. This is the only order to which we will match boundary layers and thus it will not cause any confusion. However, it will enhance the readability greatly. All the results of the following analysis are summarized in Theorem \ref{thm:Period} below.

First, introduce the parameter $\epsilon = \nu^{-1}$ and write (\ref{eqn:MainSystem}) as a singularly perturbed system
\begin{equation}
\label{eqn:MainSystemEpsilon}
\left\{
\begin{array}{rl}
	\epsilon\dfrac{dx}{dt} =& f(x)-y, \vspace{6pt}\\
	\dfrac{dy}{dt} =& \sqrt{y}\left(g(x)-y\right),
\end{array}	
\right.
\end{equation}
where we assume that $\epsilon\ll 1$. From the phase plane we see that there are two asymptotically stable branches of the slow manifold $y=f(x)$ (in the sense of Tikhonov-Levinson theory \cite{Oma13}). 

Suppose that we start our evolution on the left branch of the slow manifold where $g(x)<f(x)$ and we continue our oscillation sliding down-right. We will subsequently consider various time scales and the behaviour of $(x(t),y(t))$ on them.\\

\noindent\textbf{Outer layer (left)}. This is the slow phase of the oscillations. The leading order behaviour of (\ref{eqn:MainSystemEpsilon}) emerges when we set $\epsilon = 0$ and and then the equations constitute the reduced system
\begin{equation}
\left\{
\begin{array}{rl}
	f(x_o) =& y_o, \vspace{6pt}\\
	\dfrac{dy_o}{dt} =& \sqrt{y_o}\left(g(x_o)-y_o\right),
\end{array}	
\right.
\end{equation}
where the subscript denotes the outer approximation. This is a differential-algebraic system saying that the evolution takes place on the slow manifold $y=f(x)$. By differentiating the first equation and using the second we arrive at
\begin{equation}
\label{eqn:OuterLeadingOrder}
	f'(x_o) \frac{dx_o}{dt} = -\sqrt{f(x_o)}\left(f(x_o)-g(x_o)\right).
\end{equation}
Notice that this equation has a singularity at $x_o = x_-$ which has to be resolved. To this end, let $x_o(t) = x_- - \varphi(t)$ where $0<\varphi(t)\ll 1$. Then
\begin{equation}
	f''(x_-) \varphi \frac{d\varphi}{dt} = -\sqrt{f(x_-)}\left(f(x_-)-g(x_-)\right) + O(\varphi^2).
\end{equation} 
The leading order behaviour of $\varphi$ is thus given by
\begin{equation}
\label{eqn:OuterAsymRight}
	\varphi(t) \sim \sqrt{\frac{2}{f''(x_-)}\sqrt{f(x_-)} \left(f(x_-)-g(x_-)\right) (-t)} \quad \text{as} \quad t\rightarrow 0^{-}.
\end{equation}
Notice that we have incorporated the integration constant into the time variable setting $t=0$ precisely at the jumping point $x_0=x_-$. Since at that point the outer approximation becomes non-smooth, we have to find another set of equations describing the solution for $t>0$. \\

\noindent\textbf{Transition layer}. The singularity of the outer solution (\ref{eqn:OuterAsymRight}) suggests there should exist a stretching transformation that produces the distinguished limit. Hence, we introduce the transition layer variables
\begin{equation}
\label{eqn:TransitionStretching}
x = x_- + \epsilon^p x_t, \quad y = f(x_-) + \epsilon^q y_t, \quad t = \epsilon^p (t_t+\delta_t(\epsilon)),
\end{equation}
where $p,q,r$ have to be found. Plugging into (\ref{eqn:MainSystemEpsilon}) gives
\begin{equation}
\left\{
\begin{array}{rl}
\epsilon^{1+p-r}\dfrac{dx_t}{dt_t} =& f(x_-+\epsilon^p x_t)-f(x_-)-\epsilon^q y_t, \vspace{6pt}\\
\epsilon^{q-r}\dfrac{dy_t}{dt_t} =& -\sqrt{f(x_-)+\epsilon^q y_t}\left(f(x_-)-g(x_-+\epsilon^p x_t)+\epsilon^q y_t\right).
\end{array}	
\right.
\end{equation}
Now, we have to find the appropriate balance between various terms in the above. For the outer solution the derivative was assumed to be small but eventually blew-up, hence we expect that the correct balance will be between all three terms in the equation for $x_t$. Since $f(x_-+\epsilon^p x_t) - f(x_t) = f''(x_-) \epsilon^{2p} x_t^2/2 + O(\epsilon^{3p})$ and we have to keep the derivative of $y_t$ it follows that the only choice of parameters is $1+p-r=2p=q$ and $q-r=0$. Therefore
\begin{equation}
p=\frac{1}{3}, \quad q=r=\frac{2}{3}.
\end{equation}
Whence, the dynamical system under this stretching becomes
\begin{equation}
\left\{
\begin{array}{rl}
\dfrac{dx_t}{dt_t} =& \frac{1}{2}f''(x_-)x_t^2- y_t + O(\epsilon), \vspace{6pt}\\
\dfrac{dy_t}{dt_t} =& -\sqrt{f(x_-)}\left(f(x_-)-g(x_-)\right) + O(\epsilon^\frac{2}{3})
\end{array}	
\right.
\quad \text{as} \quad \epsilon\rightarrow 0^+.
\end{equation}
In the leading order the second equation integrates to a linear function and the first one becomes
\begin{equation}
\frac{dx_t}{dt_t} = \frac{1}{2}f''(x_-)x_t^2+\sqrt{f(x_-)}\left(f(x_-)-g(x_-)\right) t_t,
\end{equation}
where the integration constant has been incorporated into $\delta_t(\epsilon)$. This nonlinear equation is of Riccati type and can be solved with a substitution
\begin{equation}
x_t = -\frac{2}{f''(x_-)} \frac{u'}{u}.
\end{equation}
In this new variable the differential equation has the form	
\begin{equation}
\frac{d^2u}{dt_t^2} + \frac{1}{2}f''(x_-)\sqrt{f(x_-)}\left(f(x_-)-g(x_-)\right) t_t \, u= 0,
\end{equation}
which is Airy's equation having a solution
\begin{equation}
u(t_t) = E_t \,\text{Ai} \left(-C_t t_t\right) + F_t \,\text{Bi} \left(-C_t t_t\right) \quad \text{where} \quad C_t := \sqrt[3]{\frac{1}{2}f''(x_-)\sqrt{f(x_-)}\left(f(x_-)-g(x_-)\right)},
\end{equation}
with $\text{Ai}$ and $\text{Bi}$ being Airy's functions. From the known asymptotic behaviour of these (see \cite{AS}) we can see that the only choice for $x_t$ to match $x_i$ for $t_t\rightarrow -\infty$ is to choose $F_t=0$. In that case we have
\begin{equation}
\label{eqn:TransitionAsymLeft}
x_t(t_t) = \frac{2 C_t}{f''(x_-)} \frac{\text{Ai}'(-C_t t_t)}{\text{Ai}(-C_t t_t)} \sim \sqrt{\frac{2}{f''(x_-)}\sqrt{f(x_-)}\left(f(x_-)-g(x_-)\right) (-t_t)} \quad \text{as} \quad t_t \rightarrow -\infty.
\end{equation}
If we had retained $F_t$ an unnecessary minus sign would appear in front of the above asymptotic expansion. In order to conduct the match with the outer approximation we write 
\begin{equation}
x(t) = x_- + \epsilon^\frac{1}{3} x_t(\epsilon^{-\frac{2}{3}}t + \delta_t(\epsilon)) \sim \sqrt{\frac{2}{f''(x_-)}\sqrt{f(x_-)}\left(f(x_-)-g(x_-)\right) (-t+\epsilon^\frac{2}{3}\delta_t(\epsilon))} \quad \text{as} \quad \epsilon\rightarrow 0^+,
\end{equation}
and $|t|\ll \epsilon^{-2/3}$. In order to satisfy (\ref{eqn:OuterAsymRight}) we take $\delta_t(\epsilon) = 0$. 

The matching with outer layer is complete and we proceed to investigate behaviour for large $t_t$ where a matching with the inner layer is anticipated. A potential difficulty arises since (\ref{eqn:TransitionAsymLeft}) has a singularity at the first zero of Airy's function, that is to say at 
\begin{equation}
\label{eqn:AiryZero}
t_t^* = \frac{\zeta}{C_t} \quad \text{where } \zeta \text{ is smallest positive number such that Ai}(-\zeta) = 0. 
\end{equation}
Moreover, from the properties of Ai we know that the aforementioned zero is simple and
\begin{equation}
\label{eqn:TransitionAsymRight}
x_t(t_t) \sim \frac{2}{f''(x_-)} \frac{1}{t_t^* - t_t} - \frac{1}{3}\sqrt{f(x_-)}\left(f(x_-)-g(x_-)\right)t_t^*\left(t_t^*-t_t\right)\quad \text{as} \quad t_t \rightarrow \left(t_t^*\right)^-.
\end{equation}
Now, we will see how does this behaviour agree with the next layer.\\

\noindent\textbf{Inner layer}. Naturally, we proceed to investigate the inner layer approximation. To this end introduce the fast time scale $t_i = \epsilon^{-1} \left(t-\delta_i(\epsilon)\right)$ where $\delta_i(\epsilon)$ is a yet unknown translation indicating the beginning of the layer. This leads to
\begin{equation}
\left\{
	\begin{array}{rl}
	\dfrac{dx_i}{dt_i} &= f(x_i) - y_i,\\
	\dfrac{dy_i}{dt} &= \epsilon\sqrt{y_i}\left(g(x_i)-y_i\right),
	\end{array}	
\right.
\end{equation}
where now subscript denotes the inner approximation. The leading order behaviour emerges when we put $\epsilon = 0$ leaving only the first equation
\begin{equation}
	\frac{dx_i}{dt_i} = f(x_i) - Y(\epsilon),
\end{equation}
where $Y(\epsilon)$ is the integration constant that may depend on $\epsilon$ with an order smaller than one. Now, observe that if the above equation were supposed to yield a \textit{leading order} approximation to the solution of (\ref{eqn:MainSystemEpsilon}) we should have $x_i \rightarrow x_-$ as $t\rightarrow -\infty$ for $\epsilon = 0$. Therefore we put $Y = f(x_-) - \epsilon^\frac{2}{3} W$ and obtain
\begin{equation}
\label{eqn:InnerEq}
	\frac{dx_i}{dt_i} = f(x_i) - f(x_-) + \epsilon^\frac{2}{3}W.
\end{equation}
Notice that we anticipated the first meaningful power of $\epsilon$ in the asymptotic expansion. This is due to the fact that the $2/3$ exponent emerges in the transition layer approximation to which inner has to match for $t_i\rightarrow -\infty$. 

Next, focus only on the leading order case for which $\epsilon=0$. The above equation has two critical points: $x_-$ and $\widetilde{x}_-$ (see Fig. \ref{fig:PhasePlane}). The approach to the latter is exponential while to the former only algebraic since $f'(x_-)=0$. More specifically, setting $x_i = x_- + \varphi$ with $0<\varphi\ll 1$ lets us determine that
\begin{equation}
\label{eqn:InnerAsymLeft}
	\varphi(t_i) \sim -\frac{2}{f''(x_-)} \frac{1}{t_i} \quad \text{as} \quad t_i \rightarrow -\infty.
\end{equation}
In (\ref{eqn:InnerAsymLeft}) the integration constant has been incorporated into $\delta_i(\epsilon)$. 


In order to match inner and transition layers we use (\ref{eqn:InnerAsymLeft}) with $t_i$ expressed in terms of $t_t$, that is to say $t_i = \epsilon^{-\frac{1}{3}}(t_t - \epsilon^{-\frac{2}{3}}\delta_i(\epsilon))$. And hence
\begin{equation}
x_i(t_t) \sim x_- - \frac{2}{f''(x_-)} \frac{\epsilon^\frac{1}{3}}{t_t-\epsilon^{-\frac{2}{3}}\delta_i(\epsilon)} \quad \text{as} \quad \epsilon \rightarrow 0^+ \quad \text{and} \quad |t_t|\ll \epsilon^\frac{1}{3}.
\end{equation}
Remembering the stretching transformation (\ref{eqn:TransitionStretching}) we can successfully match $x_t$ and $x_i$ if $\delta_i(\epsilon) = \epsilon^{2/3} t_t^*$ which fixes the time shift. The matching with the higher order requires retaining $\epsilon$ in (\ref{eqn:InnerEq}) and plugging $x_i(t_i) = x_-+\widetilde{\varphi}(t_i)$ with $0<\widetilde{\varphi}\ll 1$. This yields
\begin{equation}
	\frac{d\widetilde{\varphi}}{dt_i} = \frac{1}{2}f''(x_-) \widetilde{\varphi}^2 + \epsilon^\frac{2}{3} W + O\left(\widetilde{\varphi}^3\right) \quad \text{as} \quad \widetilde{\varphi} \rightarrow 0.
\end{equation}
This equation can be solved to give
\begin{equation}
	\widetilde{\varphi}(t_i) = \epsilon^\frac{1}{3}\sqrt{\frac{2W}{f''(x_-)}}\tan\left(\epsilon^\frac{1}{3}\sqrt{\frac{1}{2}f''(x_-)W} \, (t_i+V)\right),
\end{equation}
where $V$ is a time shift constant. The asymptotic expansion of the above requires some care. We are interested in letting $t_i\rightarrow -\infty$, i.e. when the solution leaves the inner layer to the left. On the other hand, when $\epsilon$ goes to zero the left asymptote of $\tan$ function approaches minus infinity. This suggests that we should expand as follows
\begin{equation}
\begin{split}
	x_i(t_i) &= x_- - \frac{2}{f''(x_-)} \frac{1}{t_i+V + \epsilon^{-\frac{1}{3}}\frac{\pi}{2 \sqrt{f''(x_-)W / 2}}} \\ 
	&+\frac{1}{3}\epsilon^\frac{1}{3} W \left(t_i+V + \epsilon^{-\frac{1}{3}}\frac{\pi}{2 \sqrt{\frac{1}{2}f''(x_-)W}}\right) + O\left(t_i+V + \epsilon^{-\frac{1}{3}}\frac{\pi}{2 \sqrt{f''(x_-)W / 2}}\right)^2,
\end{split}
\end{equation} 
as $t_i \rightarrow -V - \epsilon^{-1/3}\pi/\left(2 \sqrt{f''(x_-)W / 2}\right)$. Now, expressing the above in the transition layer time $t_t$ we have
\begin{equation}
\begin{split}
	x_i(t_t) &= x_- - \epsilon^\frac{1}{3} \frac{2}{f''(x_-)} \frac{1}{t_t-t_t^*} +\frac{1}{3}\epsilon^\frac{1}{3} W \left(t_t-t_t^*\right) + O(\epsilon^{2/3}),
\end{split}
\end{equation}
as $\epsilon\rightarrow 0$ and $|t_t|\ll \epsilon^{1/3}$, where we have conducted the irrelevant time shift with an appropriate choice of constant $V$ and used the previously determined value of $\delta_i(\epsilon)= \epsilon^{-2/3}t_t^*$. If we compare the above expansion with (\ref{eqn:TransitionAsymRight}) we see that they are compatible if we choose
\begin{equation}
\label{eqn:W}
	W = \sqrt{f(x_-)} \left(f(x_-)-g(x_-)\right) t_t^*.
\end{equation}
We have thus completed the matching of inner layer with the transition approximation. 

We move further and investigate the behaviour of $x_i$ for large $t_i$. Due to (\ref{eqn:InnerEq}) and (\ref{eqn:W}) we see that it approaches the following limit
\begin{equation}
\label{eqn:x++}
\begin{split}
	x_{++}(\epsilon) &= f_{s+}^{-1}\left(f(x_-)-\epsilon^{2/3}\sqrt{f(x_-)} \left(f(x_-)-g(x_-)\right) t_t^*\right), \\
	&= \widetilde{x}_- - \frac{\epsilon^{2/3}}{f'(\widetilde{x}_-)} \sqrt{f(x_-)} \left(f(x_-)-g(x_-)\right) t_t^* + o(\epsilon^{2/3}) \quad \text{as} \quad \epsilon\rightarrow 0.
\end{split}
\end{equation}
To determine the pace of the approach we set $x_i = x_{++} - \psi$ with $0<\psi\ll 1$ to have
\begin{equation}
\label{eqn:InnerAsymRight}
\psi(t_i) \sim C_i \exp\left({f'(x_{++}(\epsilon))t_i}\right) \quad \text{as} \quad t_i \rightarrow \infty,
\end{equation}
where the integration constant has been incorporated into $C_i$. Note that $f'(x_{++})<0$. Next, we have to check if this behaviour matches the right branch of the outer layer. \\

\noindent\textbf{Outer layer (right)}. It is convenient to introduce the following right outer variables
\begin{equation}
	x = x_{p}, \quad y = y_{p}, \quad t = t_p + \delta_p(\epsilon),
\end{equation} 
Similarly as before in (\ref{eqn:OuterAsymRight}) we can show that $x_p$ has has a singular derivative at $x_+$. Moreover, by setting $x_p = x_{++} + \varphi$ we can show that
\begin{equation}
\label{eqn:OuterRightAsymRight}
	\varphi(t_p) \sim \frac{\sqrt{f(x_{++}(\epsilon))}}{f'(x_{++}(\epsilon))}\left(g(x_{++}(\epsilon))-f(x_{++}(\epsilon))\right) t_p \quad \text{as} \quad t_p \rightarrow 0,
\end{equation}
where integration constant has been incorporated into $\delta_c(\epsilon)$. We can see that the above linear form cannot be matched to the exponential behaviour of the inner layer (\ref{eqn:InnerAsymRight}). Therefore, there exists a final layer that joins these two approximations.\\

\noindent\textbf{Corner layer}. In this layer we introduce the following scalings
\begin{equation}
	x = x_{++}(\epsilon) + \epsilon^p x_c, \quad y = f(x_{++}(\epsilon)) + \epsilon^q y_c, \quad t = \epsilon^r t_c + \epsilon^\frac{2}{3}t_t^* + \delta_c(\epsilon),
\end{equation}
where we have explicitly included the previously determined time lag. Moreover, we can assume that $\delta_c(\epsilon) = o(\epsilon^{2/3})$ as $\epsilon\rightarrow 0$. The use of the same auxiliary constants $p, q$ and $r$ should not confuse the Reader with those used in the transition layer. The system (\ref{eqn:MainSystemEpsilon}) now becomes 
\begin{equation}
\left\{
\begin{array}{rl}
	\epsilon^{1+p-r}\dfrac{dx_c}{dt_c} =& f(x_{++}+\epsilon^p x_c)-f(x_{++}(\epsilon))-\epsilon^q y_c, \vspace{6pt}\\
	\epsilon^{q-r}\dfrac{dy_c}{dt_c} =& \sqrt{f(x_{++}(\epsilon))+\epsilon^q y_c}\left(g(x_{++}(\epsilon)+\epsilon^p x_c)-f(x_{++}(\epsilon))-\epsilon^q y_c\right).
\end{array}	
\right.
\end{equation}
Since now $f'(x_{++})\neq 0$ we can obtain the distinguished limit by requiring thet $1+p-r=p=q$ and $q-r=0$. Therefore, 
\begin{equation}
	p = q = r = 1,
\end{equation}
and the system (\ref{eqn:MainSystemEpsilon}) becomes
\begin{equation}
\left\{
\begin{array}{rl}
	\dfrac{dx_c}{dt_c} =& f'(\widetilde{x}_-) x_c - y_c + O(\epsilon), \vspace{6pt}\\
	\dfrac{dy_c}{dt_c} =& \sqrt{f(\widetilde{x}_-)}\left(g(\widetilde{x}_-)-f(\widetilde{x}_-)\right) + O(\epsilon)
\end{array}	
\right. \quad \text{as} \quad \epsilon\rightarrow 0. 
\end{equation}
To the leading order we have 
\begin{equation}
	\frac{dx_c}{dt_c} = f'(\widetilde{x}_-) x_c - \sqrt{f(\widetilde{x}_-)}\left(g(\widetilde{x}_-)-f(\widetilde{x}_-)\right) t_c,
\end{equation}
where the integration constant has been incorporated into $\delta_c(\epsilon)$. The above linear equation can be solved yielding
\begin{equation}
	x_c(t_c) = \frac{\sqrt{f(\widetilde{x}_-)}\left(g(\widetilde{x}_-)-f(\widetilde{x}_-)\right)}{f'(\widetilde{x}_-)^2}\left(1+f'(\widetilde{x}_-)t_c\right) + C_c \exp\left(f'(\widetilde{x}_-)t_c\right),
\end{equation}
where $C_c$ is integration constant. Since $f'(\widetilde{x}_-)<0$ we have the following asymptotic behaviours
\begin{equation}
\label{eqn:CornerAsymRight}
	x_c(t_c) \sim \frac{\sqrt{f(\widetilde{x}_-)}\left(g(\widetilde{x}_-)-f(\widetilde{x}_-)\right)}{f'(\widetilde{x}_-)}t_c \quad \text{as} \quad t_c \rightarrow \infty,
\end{equation}
and
\begin{equation}
	x_c(t_c) \sim C_c \exp\left(f'(\widetilde{x}_-)t_c\right) \quad \text{as} \quad t_c \rightarrow -\infty.
\end{equation}
Since $t_c = t_i -\epsilon^{-1}\delta_c(\epsilon)$ we thus have
\begin{equation}
\begin{split}
	x_c(t_i) &\sim C_c \exp\left(f'(\widetilde{x}_-)\left(t_i-\epsilon^{-1}\delta_c(\epsilon)\right)\right) \\
	&= C_c \exp(f'(\widetilde{x}_-)t_i) \exp\left(-f'(\widetilde{x}_-)\epsilon^{-1}\delta_c(\epsilon)\right) \quad \text{as} \quad \epsilon\rightarrow 0.
\end{split}
\end{equation}
Remembering that $x=\widetilde{x}_-+\epsilon \, x_c$ in order to match corner and inner layers, i.e. the above equation with (\ref{eqn:InnerAsymRight}), we have to take
\begin{equation}
	C_c = C_i, \quad \delta_c(\epsilon) = \frac{\epsilon\ln\epsilon}{f'(\widetilde{x}_-)}.
\end{equation}
We can see that $\delta_c(\epsilon) \ll \delta_i(\epsilon)$ and hence the solution spends less time in the corner layer than in the two previous ones.

On the other hand, for the matching inner with right outer layer we express the time scales as $t_c = \epsilon^{-1} (t_p+\delta_p(\epsilon)-\delta_c(\epsilon)-\epsilon^{2/3}t_t^*)$ and write (\ref{eqn:CornerAsymRight}) in the form
\begin{equation}
	x_c(t_p) \sim \epsilon^{-1} \frac{\sqrt{f(\widetilde{x}_-)}\left(g(\widetilde{x}_-)-f(\widetilde{x}_-)\right)}{f'(\widetilde{x}_-)} (t_p+\delta_p(\epsilon)-\delta_c(\epsilon)-\epsilon^{2/3}t_t^*) \quad \text{as} \quad \epsilon\rightarrow 0 \quad \text{and} \quad |t_p|\ll \epsilon.
\end{equation}
The consistency with (\ref{eqn:OuterRightAsymRight}) can be achieved by taking
\begin{equation}
	\delta_p(\epsilon) = \epsilon^{\frac{2}{3}} t_t^*,
\end{equation}
because $\delta_c(\epsilon)$ is of higher order. 

So far the solution have travelled from $x_-$ on the left branch of the stable manifold to $x_{++}$ on the right one. It left outer layer and went through transition, inner and corner layers in time $\epsilon^{2/3} t_t^* + o(\epsilon^{2/3})$ as $\epsilon\rightarrow 0$. The further journey takes the solution to the larger fold $x_+$ and then the evolution is mirror reflected with a change of $x_-$ to $x_+$ and $x_{++}$ to $x_{--}$. Therefore, in the last part of the proof we have to calculate the time spent on the right branch of the stable manifold. Denote it by $T_p$ and integrate (\ref{eqn:OuterLeadingOrder}) with $x_o$ replaced by $x_p$. Therefore,
\begin{equation}
	T_p = \int_{x_{++}}^{x_+} \frac{f'(x)dx}{\sqrt{f(x)}(g(x)-f(x))} = \int_{f(x_-)-\epsilon^{2/3}W}^{f(x_+)} \frac{dw}{\sqrt{w}(g(f_{s+}^{-1}(w))-w)},
\end{equation}
where $W$ is defined in (\ref{eqn:W}). Manipulating further we can separate the leading order behaviour
\begin{equation}
\begin{split}
	T_p &= I_+\left(f(x_-),f(x_+)\right) + I_+(f(x_-)-\epsilon^\frac{2}{3}W,f(x_-)) \\
	&= I_+\left(f(x_-),f(x_+)\right) + \epsilon^\frac{2}{3} t_t^* \frac{f(x_-)-g(x_-) }{g(\widetilde{x}_-)-f(x_-)} + o(\epsilon^\frac{2}{3}) \quad \text{as} \quad \epsilon\rightarrow 0.
\end{split}
\end{equation}
The solution has now arrived at $x_+$ from where it can return to the left stable branch of the manifold. The time necessary for this travel can be found essentially as before giving (\ref{eqn:Period}). The amplitudes of the limit cycle is clearly given by $x_{++}-x_{--}$ and $f(x_+)-f(x_-)$. Whence, we have proved the following theorem.

\begin{thm}
	\label{thm:Period}
	Let (\ref{eqn:Assumptions}) be satisfied. Then the period of the relaxation-oscillations has the following asymptotic expansion 
	\begin{equation}
	\label{eqn:Period}
	\begin{split}
	T &= I_+\left(f(x_-),f(x_+)\right) + I_-\left(f(x_+),f(x_-)\right) \\ 
	&+ \nu^{-\frac{2}{3}} \zeta \left(\frac{1}{\sqrt[3]{\frac{1}{2}f''(x_-)\sqrt{f(x_-)}\left(f(x_-)-g(x_-)\right)}}\left(1+\frac{f(x_-)-g(x_-)}{g(\widetilde{x}_-)-f(x_-)}\right)\right. \\
	&+\left.\frac{1}{\sqrt[3]{\frac{1}{2}f''(x_+)\sqrt{f(x_+)}\left(f(x_+)-g(x_+)\right)}}\left(1 + \frac{g(x_+) -f(x_+)}{f(x_+)-g(\widetilde{x}_+)}\right)\right)
	+ o(\nu^{-\frac{2}{3}}),
	\end{split}
	\end{equation}
	as $\nu\rightarrow\infty$, where
	\begin{equation}
	\label{eqn:I}
	I_\pm(u,v) = \int_{u}^{v} \frac{dw}{\sqrt{w}(g(f_{s\pm}^{-1}(w))-w)},
	\end{equation}
	and $-\zeta \approx -2.338$ is the largest zero of Airy's function Ai. Moreover, the $x-$ and $y-$amplitudes of the cycle are
	\begin{equation}
	\begin{split}
	A_x &= \widetilde{x}_--\widetilde{x}_+ \\
	&+ \nu^{-\frac{2}{3}}\zeta\left(\frac{1}{f'(\widetilde{x}_+)}\sqrt[3]{\frac{2f(x_+)\left(f(x_+)-g(x_+)\right)^2}{f''(x_+)}}-\frac{1}{f'(\widetilde{x}_-)} \sqrt[3]{\frac{2f(x_-)\left(f(x_-)-g(x_-)\right)^2}{f''(x_-)}}\right) \\
	&+ o(\nu^{-\frac{2}{3}})
	\end{split}
	\end{equation}
	and
	\begin{equation}
	A_y = f(x_+) - f(x_-) + O(\nu^{-1}),
	\end{equation}
	as $\nu\rightarrow\infty$. 
\end{thm}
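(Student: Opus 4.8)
The plan is to obtain \eqref{eqn:Period} by assembling the matched-asymptotic construction carried out above into one complete traversal of the limit cycle and summing the time spent in each layer. The existence of a stable periodic orbit for every $\nu\geq\nu_c$ is granted by the general singular-perturbation theory \cite{Mis13,Kru01}; with $\epsilon=\nu^{-1}$ the slow manifold $y=f(x)$ of \eqref{eqn:MainSystemEpsilon} has, in the sense of Tikhonov--Levinson \cite{Oma13}, two asymptotically stable branches $f_{s-}$ and $f_{s+}$ separated by the folds $x_-$ and $x_+$, and one period is built from: a slow slide along $f_{s-}$ ending at the fold $x_-$; a transition layer at $x_-$; a fast inner jump onto $f_{s+}$; a corner layer near the landing point $x_{++}(\epsilon)$; a slow slide along $f_{s+}$ ending at $x_+$; and then the analogous sequence at $x_+$ carrying the orbit back to $f_{s-}$ at $x_{--}(\epsilon)$. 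Each of these pieces has already been analysed; what remains is to add up the elapsed times and keep terms through order $\epsilon^{2/3}$.

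First I would record the two slow contributions. On a stable branch the reduced flow is \eqref{eqn:OuterLeadingOrder}, which after the substitution $w=f(x)$ integrates to a quadrature; the time spent on $f_{s+}$ is $\int_{x_{++}}^{x_+}f'(x)\,dx/(\sqrt{f(x)}(g(x)-f(x)))=I_+(f(x_-)-\epsilon^{2/3}W,\,f(x_+))$ with $I_+$ as in \eqref{eqn:I} and $W$ as in \eqref{eqn:W}, because the orbit re-enters the slow manifold not at $x_-$ exactly but at the shifted point $x_{++}(\epsilon)$ of \eqref{eqn:x++}. Splitting this as $I_+(f(x_-),f(x_+))$ plus the short-interval integral $I_+(f(x_-)-\epsilon^{2/3}W,f(x_-))$ and evaluating the latter by the leading behaviour of the integrand near $w=f(x_-)$, where $f_{s+}^{-1}(f(x_-))=\widetilde{x}_-$, produces $\epsilon^{2/3}t_t^*\,(f(x_-)-g(x_-))/(g(\widetilde{x}_-)-f(x_-))+o(\epsilon^{2/3})$; the time on $f_{s-}$ gives the mirror expression with $I_-$, $x_+$, $\widetilde{x}_+$.

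Next I would collect the fast contributions. The inner and corner layers are crossed in times of order $\epsilon$ and $\epsilon\ln\epsilon$ (these are exactly the time shifts $\delta_i(\epsilon)-\delta_c(\epsilon)$ fixed above), hence fall into the $o(\epsilon^{2/3})$ remainder. The genuine $O(\epsilon^{2/3})$ fast cost is the transition layer: under the stretching \eqref{eqn:TransitionStretching} with $p=1/3$, $q=r=2/3$ the $x$-equation becomes Riccati, hence Airy, and the orbit leaves the layer at the first zero of $\mathrm{Ai}$, i.e.\ at $t_t^*=\zeta/C_t$ with $C_t=\sqrt[3]{\tfrac12 f''(x_-)\sqrt{f(x_-)}(f(x_-)-g(x_-))}$, which in the original time is a duration $\epsilon^{2/3}\zeta/C_t$. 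Adding this transition duration at $x_-$ to the short-interval outer correction at $x_-$ yields the bracket $\zeta C_t^{-1}\bigl(1+(f(x_-)-g(x_-))/(g(\widetilde{x}_-)-f(x_-))\bigr)$, and the same computation at the fold $x_+$ gives the second bracket of \eqref{eqn:Period}; combining the two $O(1)$ quadratures $I_++I_-$ with these two $O(\epsilon^{2/3})$ brackets and writing $\epsilon=\nu^{-1}$ is \eqref{eqn:Period}. For the amplitudes, $A_x=x_{++}(\epsilon)-x_{--}(\epsilon)$ expanded by the two instances of \eqref{eqn:x++} gives $\widetilde{x}_--\widetilde{x}_+$ plus the stated $\nu^{-2/3}$ term (using $t_t^*=\zeta/C_t$ and simplifying the cube roots), while $A_y$ is the vertical gap of the slow manifold between the folds, $f(x_+)-f(x_-)$, with an $O(\nu^{-1})$ error equal to the width of the outer expansion.

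The main obstacle -- and the only genuinely delicate point -- is the bookkeeping of the overlap regions, so that each stretch of time is counted exactly once and the $\epsilon^{2/3}$ corrections add up correctly. Concretely one must verify that the short-interval outer integral $I_+(f(x_-)-\epsilon^{2/3}W,f(x_-))$ and the transition-layer duration are genuinely additive rather than double-counted, that the successively determined time lags $\delta_t=0$, $\delta_i=\epsilon^{2/3}t_t^*$, $\delta_c=\epsilon\ln\epsilon/f'(\widetilde{x}_-)$, $\delta_p=\epsilon^{2/3}t_t^*$ close up consistently around the whole loop, and that the sign conventions in the Airy solution (notably $F_t=0$, forced by matching to the monotone outer branch, which pins down $W$ through \eqref{eqn:W}) are the ones that keep the orbit periodic. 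Once these consistency checks are in place, every remaining step is a routine Taylor expansion of $f$ and $g$ about $x_\pm$ and $\widetilde{x}_\pm$, and of the Airy asymptotics from \cite{AS}.
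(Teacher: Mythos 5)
Your proposal follows the paper's own argument essentially verbatim: the theorem is proved there by exactly this assembly of the outer quadratures $I_\pm$, the Airy-governed transition-layer duration $\epsilon^{2/3}t_t^*=\epsilon^{2/3}\zeta/C_t$, the short-interval outer correction $I_+(f(x_-)-\epsilon^{2/3}W,f(x_-))$ evaluated at $\widetilde{x}_-$, the mirror terms at $x_+$, and the amplitudes read off from $x_{++}-x_{--}$ and $f(x_+)-f(x_-)$. The decomposition, the orders assigned to each layer, and the matching constants all coincide with the paper's construction, so the proposal is correct and takes the same route.
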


From the above calculations we can immediately see that it can be conducted to any arbitrary $f$ and $g$ satisfying the usual geometric conditions. The specific forms (\ref{eqn:Nullclines}) are irrelevant for all the considerations. Therefore, we have the following corollary. 
\begin{cor}
The assertions of Theorem \ref{thm:Period} hold for any twice-differentiable functions $f$ and $g$ where $f$ has a single non-degenerate minimum and maximum while $g$ is sigmoid. 
\end{cor}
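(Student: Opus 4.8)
The plan is to revisit the matched-asymptotic construction carried out above for Theorem \ref{thm:Period} and to record, layer by layer, exactly which features of the pair $(f,g)$ were actually used; the claim is that the explicit sigmoid expressions (\ref{eqn:Nullclines}) never entered the argument except through those structural properties, so the same construction and the same formulas (\ref{eqn:Period})–(\ref{eqn:I}) survive verbatim. Concretely, the properties invoked are: $f\in C^2$ with a single non-degenerate minimum at $x_-$ and a single non-degenerate maximum at $x_+$, i.e. $f'(x_\pm)=0$ and $f''(x_-)>0>f''(x_+)$, so that $f$ splits into the three monotone, invertible branches $f_{s-},f_u,f_{s+}$ and the auxiliary points $\widetilde x_\pm$ in (\ref{eqn:ExtremaExpanded}) are well defined with $f'(\widetilde x_\pm)\neq 0$; $g$ is sigmoid, hence monotone, bounded and continuous; and the geometric/sign conditions of (\ref{eqn:Assumptions}) continue to hold, yielding a unique unstable critical point $x_c$ with $g'(x_c)>f'(x_c)>0$, $\nu>\nu_c=\tfrac32\sqrt{f(x_c)}/f'(x_c)$, together with the consequences $f(x_-)>g(x_-)$, $f(x_+)<g(x_+)$ and $g(\widetilde x_-)>f(x_-)$, $g(\widetilde x_+)<f(x_+)$.

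Granting these, one checks that every step goes through for general $f,g$: the reduced (slow-manifold) equation (\ref{eqn:OuterLeadingOrder}) uses only invertibility of $f$ on its stable branches and the fixed sign of $f-g$ there; the square-root singularity at the folds and the expansion (\ref{eqn:OuterAsymRight}) use $f''(x_\pm)\neq 0$; the transition-layer rescaling producing Airy's equation, its zero $\zeta$, and the inner expansions (\ref{eqn:TransitionAsymLeft})–(\ref{eqn:TransitionAsymRight}) use $f''(x_-)$ and the local constants $f(x_-),g(x_-)$ only; the inner layer uses $f'(x_-)=0$ for the algebraic decay (\ref{eqn:InnerAsymLeft}) and $f'(\widetilde x_-)<0$ for the exponential approach (\ref{eqn:InnerAsymRight}); the corner layer uses $f'(\widetilde x_-)\neq 0$; and the two quadratures $I_\pm$ in (\ref{eqn:I}) converge because $g(f_{s\pm}^{-1}(w))-w$ keeps a fixed sign for $w$ between the fold values, which is precisely the statement that the nullclines $f$ and $g$ do not meet on the stable branches (again a consequence of (\ref{eqn:Assumptions})). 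The right half of the cycle being the mirror image of the left, the contributions add to give (\ref{eqn:Period}) and the amplitude formulas, so the conclusion of Theorem \ref{thm:Period} is reproduced with no change.

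The main obstacle — everything else being essentially bookkeeping — is the input that was quoted at the start of the analysis from \cite{Mis13,Kru01}: the existence of a unique stable limit cycle of (\ref{eqn:MainSystemEpsilon}) for all $\nu\geq\nu_c$. One must confirm that the geometric singular perturbation / Tikhonov–Levinson results cited there apply to a generic planar relaxation system $\epsilon\dot x=f(x)-y$, $\dot y=\sqrt{y}\,(g(x)-y)$ in which $f$ is merely an S-shaped $C^2$ curve and $g$ a monotone curve meeting it once at a repelling point; this is the classical van der Pol configuration and those theorems are stated at that level of generality, so the only genuine check is that the degeneracy of the factor $\sqrt y$ on $\{y=0\}$ is harmless, which it is since the orbit stays in $\{y\geq f(x_-)>0\}$, uniformly away from that line. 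A secondary, minor caveat is regularity: the cubic remainder written in (\ref{eqn:TransitionAsymRight}) and in Proposition \ref{prop:CriticalPoint} formally calls for $f,g\in C^3$; under the bare $C^2$ hypothesis these remainders should be read as $o$ rather than $O$ of the stated order, which does not affect the leading term nor the explicit $\nu^{-2/3}$ correction in (\ref{eqn:Period}), but one may simply strengthen the assumption to $f,g\in C^3$ if the full expansion with controlled error is wanted. This completes the reduction and hence the proof of the corollary.
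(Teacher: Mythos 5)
Your proposal is correct and follows the same route as the paper, which disposes of the corollary with the single observation that the matched-asymptotic construction never used the specific sigmoid parametrizations (\ref{eqn:Nullclines}), only the geometric properties of $f$ and $g$; your layer-by-layer audit simply makes that observation explicit. Your two added caveats --- that the cited existence theory for the limit cycle applies in this generality once the $\sqrt{y}$ factor is seen to be harmless because the orbit stays in $y\geq f(x_-)>0$, and that the remainder estimates strictly want $f,g\in C^3$ --- are accurate refinements that the paper's one-sentence justification omits.
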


Furthermore, it turns out that the period can be easily approximated by elementary functions.
\begin{cor}
Let (\ref{eqn:Assumptions}) be satisfied. Then, the period of relaxation-oscillations for sufficiently large $\nu$ can be bounded
\begin{equation}
\label{eqn:PeriodApprox}
	T_- \leq T_{ro} \leq T_+,
\end{equation}
where
\begin{equation}
\begin{split}
	T_-&=\phi\left(f(x_-),f(x_+),1-d\right)+\phi\left(f(x_+),f(x_-),1+d\right), \\
	T_+ &= \phi\left(f(x_-),f(x_+),g(x_-)\right)+\phi\left(f(x_+),f(x_-),g(x_+)\right),
\end{split}
\end{equation}
with
\begin{equation}
	\phi(u,v,w) = \frac{1}{\sqrt{w}}\ln \left(\frac{\sqrt{v}-\sqrt{w}}{\sqrt{u}-\sqrt{w}}\frac{\sqrt{u}+\sqrt{w}}{\sqrt{v}+\sqrt{w}}\right).
\end{equation}
Moreover, for sufficiently large $\nu$ the amplitude of $x=x(t)$ satisfies
\begin{equation}
\label{eqn:AmplitudeApprox}
	A_x \leq 2c + b \left(f(x_+)-f(x_-)\right).
\end{equation}
\end{cor}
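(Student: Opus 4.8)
The plan is to reduce everything to Theorem \ref{thm:Period}. That theorem already identifies the leading part of the period as $I_+(f(x_-),f(x_+))+I_-(f(x_+),f(x_-))$, with a correction that is $o(\nu^{-2/3})$, and the leading part of the $x$-amplitude as $\widetilde x_- - \widetilde x_+$, again up to $O(\nu^{-2/3})$. So it suffices to bound these two leading quantities by the stated elementary expressions \emph{with strict inequality}; since the remaining corrections vanish as $\nu\to\infty$, the (non-strict) bounds (\ref{eqn:PeriodApprox}) and (\ref{eqn:AmplitudeApprox}) then hold for all sufficiently large $\nu$. Strictness is not a luxury here but genuinely needed to absorb the corrections, and it comes for free: the global constants $1\mp d$ are the infimum/supremum of the sigmoid $g$ over all of $\mathbb{R}$, attained only in the limit, and $g(x_\mp)$ are attained only at a single endpoint of the relevant interval while the integrals below sample a whole interval.

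For the period I would bound each of the two branch integrals in (\ref{eqn:I}) separately. Geometrically, $I_+(f(x_-),f(x_+))$ is the time spent sliding along the right stable branch $f_{s+}$: as $w$ runs over $[f(x_-),f(x_+)]$ the argument $f_{s+}^{-1}(w)$ fills $[x_+,\widetilde x_-]$, and by (\ref{eqn:Assumptions}) we have $g>f=w$ on that whole range, so the integrand $1/\big(\sqrt w\,(g(f_{s+}^{-1}(w))-w)\big)$ is positive. Likewise $I_-(f(x_+),f(x_-))$ is the time on the left branch $f_{s-}$, where $f_{s-}^{-1}(w)$ fills $[\widetilde x_+,x_-]$ and $g<w$. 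On each such interval $g$ is monotone (increasing sigmoid), hence confined between its endpoint values $g(x_\mp)$ and, a fortiori, between $1-d$ and $1+d$. Replacing $g\circ f_{s\pm}^{-1}$ by a constant $G$ and substituting $w=s^2$ followed by a partial-fraction integration turns the integral into $\pm\phi(f(x_\mp),f(x_\pm),G)$, where I use that $\phi$ is antisymmetric in its first two arguments, $\phi(v,u,w)=-\phi(u,v,w)$, to land on the form stated in the corollary. Finally, the integrand is monotone in the constant $G$ (decreasing when $G-w>0$, increasing when $w-G>0$), so replacing $g$ by the smaller/larger admissible constant over- or under-estimates the integral in the direction dictated by which side of $w$ the function $g$ lies on; feeding in the global constants $1\mp d$ produces $T_-$ and feeding in the endpoint values $g(x_\mp)$ produces $T_+$. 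Summing the two branch contributions gives (\ref{eqn:PeriodApprox}).

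For the amplitude I only need the leading term $\widetilde x_- - \widetilde x_+$ from Theorem \ref{thm:Period}, plus an elementary bound on it obtained directly from the explicit form of $f$ in (\ref{eqn:Nullclines}). Writing $f(x)=\tfrac1b\big(a+c\,\sigma((x-x_\alpha)/\Delta\alpha)-x\big)$ as $x=a-bf(x)+c\,\sigma((x-x_\alpha)/\Delta\alpha)$ with $\sigma\in(-1,1)$, and evaluating at $\widetilde x_-$ (where $f(\widetilde x_-)=f(x_-)$ by definition of $\widetilde x_-$) and at $\widetilde x_+$ (where $f(\widetilde x_+)=f(x_+)$), I get $\widetilde x_- < a-bf(x_-)+c$ and $\widetilde x_+ > a-bf(x_+)-c$; subtracting yields $\widetilde x_- - \widetilde x_+ < 2c + b\big(f(x_+)-f(x_-)\big)$. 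The strictness again leaves room to absorb the $O(\nu^{-2/3})$ term, giving (\ref{eqn:AmplitudeApprox}) for large $\nu$.

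The only genuinely delicate part is the bookkeeping of orientations and signs: one must match $I_-(f(x_+),f(x_-))$ to the $\phi$ with first two arguments $f(x_-),f(x_+)$ (note the ``crossed'' order, and that $I_-$ has its limits of integration reversed so that it is a positive time), and then, using (\ref{eqn:Assumptions}), keep careful track of whether $g$ lies above or below the current level $w$ on each branch so that the monotonicity-in-$G$ comparison points the inequality the right way. The closed-form integration and the amplitude estimate are routine once that is pinned down.
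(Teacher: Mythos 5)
Your proof is correct and follows essentially the same route as the paper's: bound $g\circ f_{s\pm}^{-1}$ below and above by the global sigmoid limits $1\mp d$ and the endpoint values $g(x_\mp)$ respectively, evaluate the resulting elementary integrals as $\phi$, and bound the amplitude by trapping $f$ between its linear asymptotes. Your explicit remark that the leading-order inequalities are strict and therefore absorb the $o(\nu^{-2/3})$ corrections for large $\nu$ is a small but welcome point of care that the paper's proof leaves implicit.
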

\begin{proof}
We will find lower and upper estimates on the leading order term in the asymptotic formula for the period (\ref{eqn:Period}). Notice that in (\ref{eqn:Period}) the leading order term is written in terms of the integrals $I_\pm$ given in (\ref{eqn:I}). Since $g$ is bounded due to the properties of sigmoid functions, the term $g \circ f_{s\pm}^{-1}$ can be bounded from below and above by $1\pm d$. This simplifies the integral yielding
\begin{equation}
	\phi(v,u,1+d)=\int_u^v\frac{dw}{\sqrt{w}\left(1+d-w\right)} < I_\pm(u,v) < \int_u^v \frac{dw}{\sqrt{w}\left(1-d-w\right)} = \phi(v,u,1-d),
\end{equation}
which follows by simple calculation. Therefore, since $g$ is increasing and $f_{s+}^{-1}(w) \geq x_+$ for $w \geq f(x_-)$ we have
\begin{equation}
	\phi\left(f(x_+),f(x_-),1+d\right) < I_+\left(f(x_-),f(x_+)\right) \leq \phi\left(f(x_+),f(x_-),g(x_+)\right),
\end{equation}
and similarly
\begin{equation}
\phi\left(f(x_+),f(x_-),1-d\right) < I_-\left(f(x_+),f(x_-)\right) \leq \phi\left(f(x_+),f(x_-),g(x_-)\right).
\end{equation}
Adding both above inequalities yields the assertion. 

For the estimate on the amplitude observe that $f$ approaches its asymptotes from within, that is to say
\begin{equation}
	\frac{1}{b}\left(a-c-x\right) < f(x) < \frac{1}{b}\left(a+c-x\right),
\end{equation}
by the properties of sigmoid functions. Therefore, from (\ref{eqn:x++}) it follows that $x_{++} \leq f_{x_+}^{-1}(f(x_-)) < \widehat{x}_{++}$ where $\widehat{x}_{++}$ is the abscissa of the point of intersection of the asymptote with a value $f(x_-)$. Completely analogous reasoning concerning the left branch of $f$ implies the assertion. This ends the proof.
\end{proof}

The above corollary gives us some useful formulas for determining the period od the oscillations. Notice that in order to find the lower bound we need only four numbers: two local extrema of $f$ and two bounds of $\xi$. By Theorem \ref{thm:Period} we know that the leading order term of $y-$amplitude of the oscillations is equal to the difference between these two extrema. Whence, knowing the ice sheet extent and its physical properties we are able to estimate the period od ice ages. Note that this result assumes only two essential mechanisms: ice-albedo and temperature-precipitation feedbacks. It is a \emph{universal} formula independent of any parametrization! For our data we have
\begin{equation}
	T_- = 95.8 \times 10^3 \; \text{years}, \quad T_+ = 125 \times 10^3 \; \text{years},
\end{equation}
which gives a very reasonable estimate of the oscillations during the last one million years where period is approximately $100\times 10^3$ years. Of course, by manipulating the parameters we can obtain other sensible outcomes of our model. However, the main point is that a simple two-dimensional model can provide a realistic leading-order approximation to the very complex natural system.

Notice that formulas for both period estimates in (\ref{eqn:PeriodApprox}) contain logarithmic terms. It is a simple observation that $T_+$ can diverge to infinity when $g(x_\pm)\rightarrow f(x_\pm)$. However, this approach is logarithmic hence very slow and, therefore, is in principle resistant to small variations in the model parameters. Since not every parameter of our model is directly measurable, this is a valuable feature of the formula for the period that allows for certain error. 

An exemplary illustration of the limit cycle of (\ref{eqn:MainSystem}) is given on Fig. \ref{fig:LimitCycle}. We can see that the range of the ice sheet extent shows realistic values while the variation of the temperature is somewhat too large. This is probably due to the initial simplification of an immobile snow line, i.e. setting $\kappa = 0$ (which will be removed in our future work). However, the period lies very close to the real-world value: for $\nu = 10$ it is equal to $121\times 10^3$~y. The most noteworthy observation is the presence of asymmetric relaxation-oscillations. This is one of the most characteristic features of the Pleistocene ice-ages. As we mentioned above, our model exhibits such a behaviour for all $\nu > \nu_c = 0.1$. 

\begin{figure}
	\centering
	\includegraphics[scale=1]{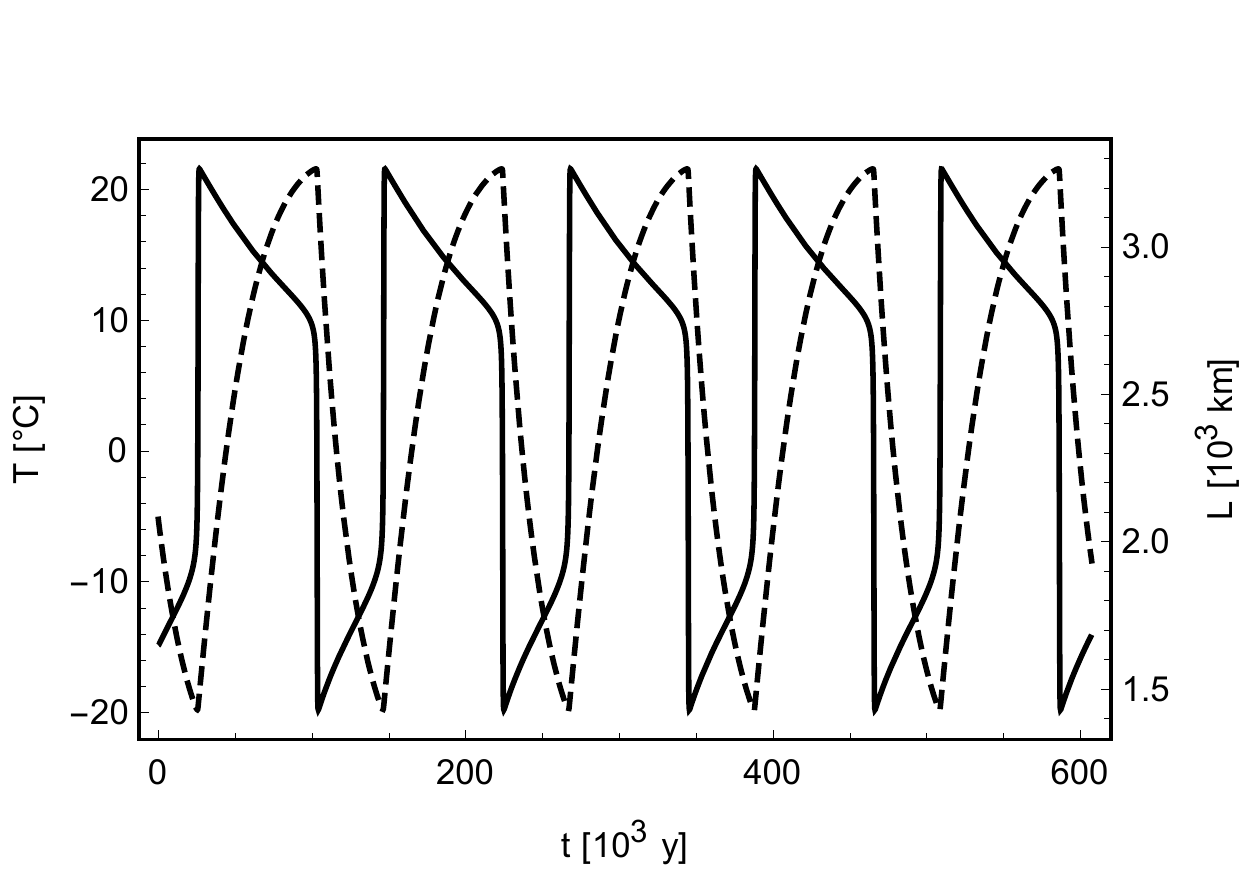}
	\includegraphics[scale=1]{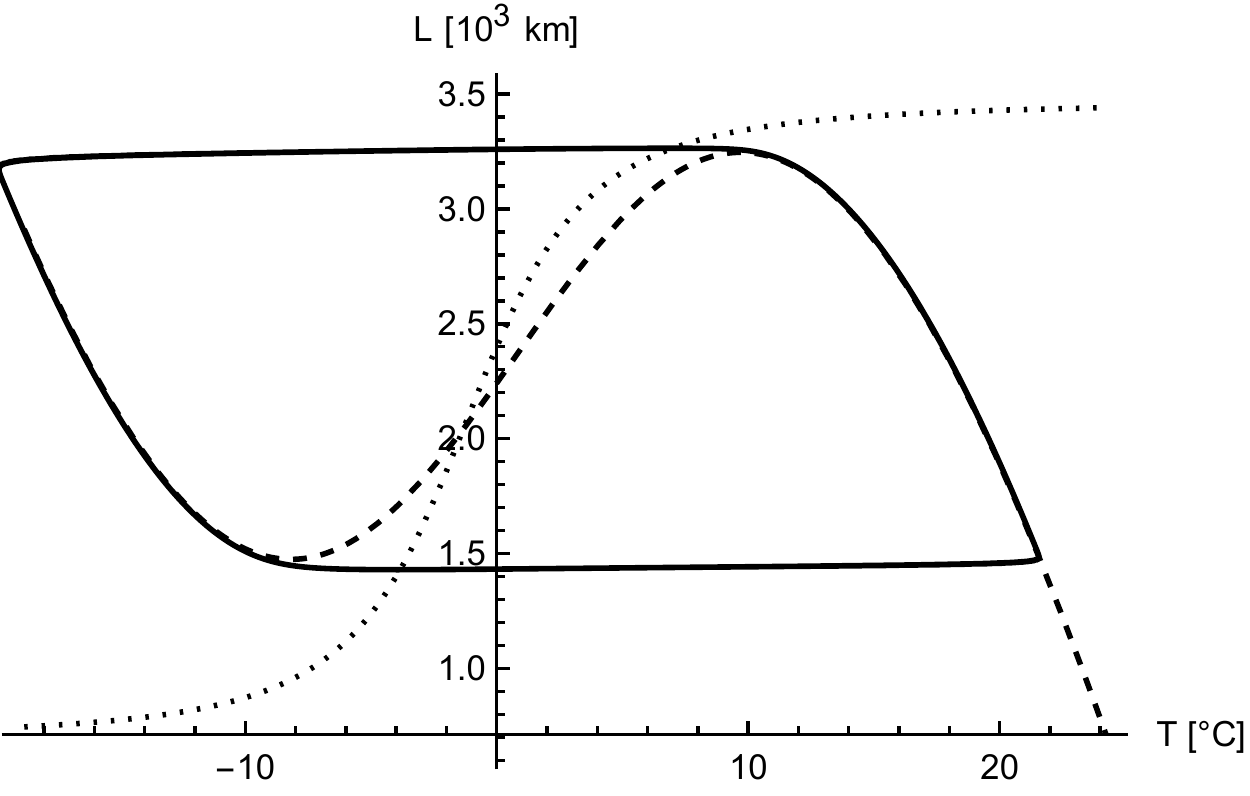}
	\caption{Time series (top) and the phase plane (bottom) of the dimensional form of the system (\ref{eqn:MainSystem}). On the top: solid line is the temperature while dashed line represents ice sheet extent. On the bottom: solid line is the orbit, dashed line is the $f$-nullcline while dotted line represents the $g$-nullcline. Here, $\nu = 10$.}
	\label{fig:LimitCycle} 
\end{figure}

The numerical verification of the asymptotic formula (\ref{eqn:Period}) is depicted on Fig. \ref{fig:Period}. We have calculated the period of the limit cycle for $\nu$ changing from $1$ to $10^7$. The results are presented on two plots: the first concerning the small values of $\nu$ and the second in log-log scale for the large values of the bifurcation parameter. For the latter we have plotted the difference between the numerical value and the leading order term of (\ref{eqn:Period}), denoted by $\overline{T}_{ro}$, versus the $\nu^{-2/3}$ term.  We can see that the accuracy is very good even for $\nu$ of order of unity. The power-law behaviour is clearly seen. Surprisingly, the formula is decently accurate even for small values of $\nu$. 

\begin{figure}
	\centering
	\includegraphics[scale=0.7]{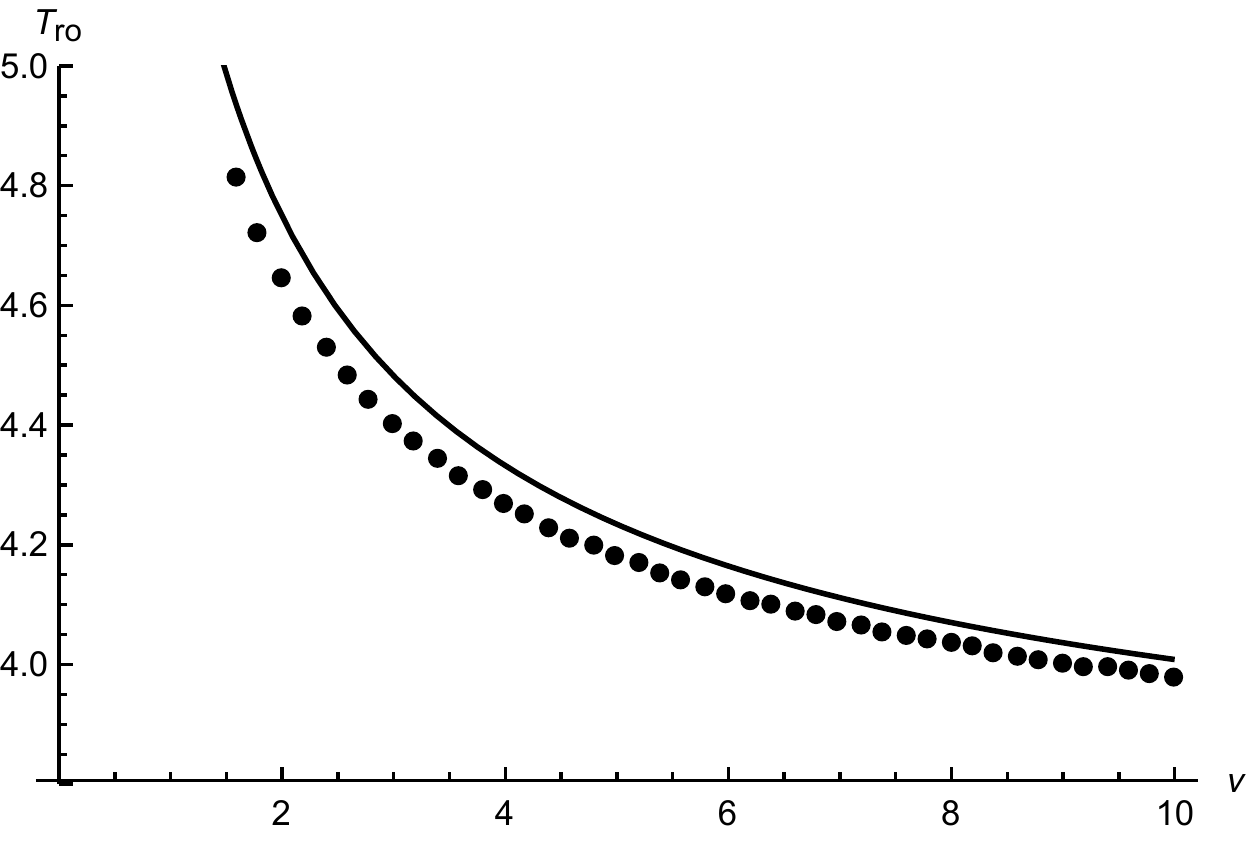}
	\includegraphics[scale=0.7]{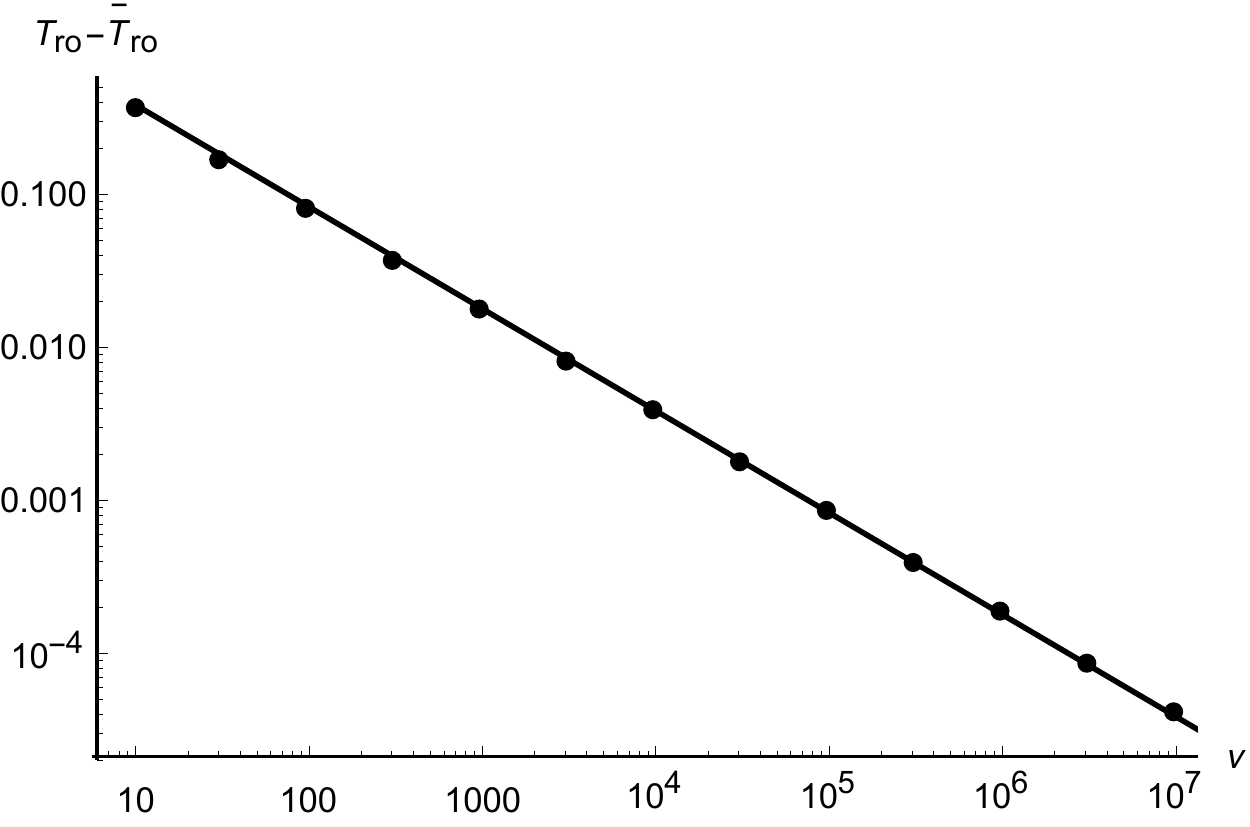}
	\caption{On the left: period of the relaxation-oscillations calculated numerically (points) and given by the formula (\ref{eqn:Period}) (solid line). On the right: the plot of the $\nu^{-2/3}$ term of (\ref{eqn:Period}) (solid line) and its numerical approximation (points). }
	\label{fig:Period}
\end{figure}

\section{Conclusion and future work}
There are several important remarks concerning our generalization of the KCG model. As we have seen, assuming two climate feedback mechanisms, i.e. ice-albedo and precipitation-temperature, leads to self-sustained oscillations after a Hopf bifurcation which happens for very small parameter magnitude. The astronomical forcing is not necessary to produce the dynamics of a realistic period. Moreover, the mathematical formulations of the involved feedbacks can be arbitrary as long as they satisfy natural assumptions of monotonicity and boundedness. This frees the model from unnecessary ad-hoc choice of the complex system parametrizations and furnishes a more robust model. 

In the vicinity of the Hopf-bifurcating critical point a stable limit cycle can exist provided there are no other critical points nearby. This let us to build an approximation of the periodic orbit of the model and analyse its relaxation-oscillations. Remarkably, a simple but physical model such as ours can exhibit a complex behaviour resembling the Pleistocene glaciations without any additional mechanism added. We have provided an asymptotic formula for the oscillation period and verified that it is decently accurate for a wide range of the bifurcation parameter. Although the exact closed-form formula for the leading order term in the period cannot be found, a simple estimates can be readily derived. The lower one is independent of the particular form of the parametrizations while the higher is insensitive to measurement errors. Both of them yield realistic estimates of the internal climate oscillations. 

In our future work we will try to incorporate several other degrees of freedom and analyse the corresponding dynamical system. These may include a slowly varying $\nu$ used to model the Middle Pleistocene Transition where the period of glaciations changed from $40$ to $100$ thousands of years. Another point of interest will be to include the viscoelastic response of the lithosphere under the load of the ice sheet. We also plan to investigate the role of the carbon cycle.  


\begin{thebibliography}{10}
	
	\bibitem{AS}
	Milton Abramowitz and Irene Stegun.
	\newblock Handbook of mathematical functions.
	\newblock {\em American Journal of Physics}, 34(2):177--177, 1966.
	
	\bibitem{Ber88}
	Andr{\'e} Berger.
	\newblock Milankovitch theory and climate.
	\newblock {\em Reviews of geophysics}, 26(4):624--657, 1988.
	
	\bibitem{Bud69}
	Mikhail~I Budyko.
	\newblock The effect of solar radiation variations on the climate of the earth.
	\newblock {\em Tellus}, 21(5):611--619, 1969.
	
	\bibitem{Cru12}
	Michel Crucifix.
	\newblock Oscillators and relaxation phenomena in pleistocene climate theory.
	\newblock {\em Phil. Trans. R. Soc. A}, 370(1962):1140--1165, 2012.
	
	\bibitem{De13}
	Bernard De~Saedeleer, Michel Crucifix, and Sebastian Wieczorek.
	\newblock Is the astronomical forcing a reliable and unique pacemaker for
	climate? a conceptual model study.
	\newblock {\em Climate Dynamics}, 40(1-2):273--294, 2013.
	
	\bibitem{Eng17}
	Hans Engler, Hans~G Kaper, Tasso~J Kaper, and Theodore Vo.
	\newblock Dynamical systems analysis of the maasch--saltzman model for glacial
	cycles.
	\newblock {\em Physica D: Nonlinear Phenomena}, 359:1--20, 2017.
	
	\bibitem{Fow15}
	AC~Fowler.
	\newblock A simple thousand-year prognosis for oceanic and atmospheric carbon
	change.
	\newblock {\em Pure and Applied Geophysics}, 172(1):49--56, 2015.
	
	\bibitem{Fow13}
	AC~Fowler, REM Rickaby, and EW~Wolff.
	\newblock Exploration of a simple model for ice ages.
	\newblock {\em GEM-International Journal on Geomathematics}, 4(2):227--297,
	2013.
	
	\bibitem{Fow11}
	Andrew Fowler.
	\newblock {\em Mathematical geoscience}, volume~36.
	\newblock Springer Science \& Business Media, 2011.
	
	\bibitem{Ghi81}
	Michael Ghil and Herv{\'e} Le~Treut.
	\newblock A climate model with cryodynamics and geodynamics.
	\newblock {\em Journal of Geophysical Research: Oceans}, 86(C6):5262--5270,
	1981.
	
	\bibitem{Ghi83}
	Michael Ghil and John Tavantzis.
	\newblock Global hopf bifurcation in a simple climate model.
	\newblock {\em SIAM Journal on Applied Mathematics}, 43(5):1019--1041, 1983.
	
	\bibitem{Hay76}
	James~D Hays, John Imbrie, Nicholas~J Shackleton, et~al.
	\newblock Variations in the earth's orbit: pacemaker of the ice ages.
	\newblock American Association for the Advancement of Science Washington, DC,
	1976.
	
	\bibitem{Hol12}
	Mark~H Holmes.
	\newblock {\em Introduction to perturbation methods}, volume~20.
	\newblock Springer Science \& Business Media, 2012.
	
	\bibitem{Kal97}
	E~K{\"a}ll{\'e}n, C~Crafoord, and M~Ghil.
	\newblock Free oscillations in a climate model with ice-sheet dynamics.
	\newblock {\em Journal of the Atmospheric Sciences}, 36(12):2292--2303, 1979.
	
	\bibitem{Kev13}
	Jirayr Kevorkian and Julian~D Cole.
	\newblock {\em Perturbation methods in applied mathematics}, volume~34.
	\newblock Springer Science \& Business Media, 2013.
	
	\bibitem{Kru01}
	Martin Krupa and Peter Szmolyan.
	\newblock Relaxation oscillation and canard explosion.
	\newblock {\em Journal of Differential Equations}, 174(2):312--368, 2001.
	
	\bibitem{Lis05}
	Lorraine~E Lisiecki and Maureen~E Raymo.
	\newblock A pliocene-pleistocene stack of 57 globally distributed benthic
	$\delta$18o records.
	\newblock {\em Paleoceanography}, 20(1), 2005.
	
	\bibitem{Maa90}
	Kirk~A Maasch and Barry Saltzman.
	\newblock A low-order dynamical model of global climatic variability over the
	full pleistocene.
	\newblock {\em Journal of Geophysical Research: Atmospheres},
	95(D2):1955--1963, 1990.
	
	\bibitem{Mac90}
	AD~MacGillivray.
	\newblock Justification of matching with the transition expansion of van der
	pol’s equation.
	\newblock {\em SIAM Journal on Mathematical Analysis}, 21(1):221--240, 1990.
	
	\bibitem{McG12}
	Richard McGehee and Clarence Lehman.
	\newblock A paleoclimate model of ice-albedo feedback forced by variations in
	earth's orbit.
	\newblock {\em SIAM Journal on Applied Dynamical Systems}, 11(2):684--707,
	2012.
	
	\bibitem{McG14}
	Richard McGehee and Esther Widiasih.
	\newblock A quadratic approximation to budyko's ice-albedo feedback model with
	ice line dynamics.
	\newblock {\em SIAM Journal on Applied Dynamical Systems}, 13(1):518--536,
	2014.
	
	\bibitem{McG05}
	Kendal McGuffie and Ann Henderson-Sellers.
	\newblock {\em A climate modelling primer}.
	\newblock John Wiley \& Sons, 2005.
	
	\bibitem{Mil98}
	M~Milankovi{\'c}.
	\newblock Canon of insolation and the ice-age problem (translated from german
	edition of 1941), 619 pp.
	\newblock {\em Agency for Textbooks, Belgrade}, 1998.
	
	\bibitem{Mis13}
	E~Mishchenko.
	\newblock {\em Differential equations with small parameters and relaxation
		oscillations}, volume~13.
	\newblock Springer Science \& Business Media, 2013.
	
	\bibitem{Nip88}
	K~Nipp.
	\newblock An algorithmic approach for solving singularly perturbed initial
	value problems.
	\newblock In {\em Dynamics Reported}, pages 173--263. Springer, 1988.
	
	\bibitem{Oma13}
	Robert~E O'Malley.
	\newblock Singular perturbation methods for ordinary differential equations.
	\newblock 2013.
	
	\bibitem{Per13}
	Lawrence Perko.
	\newblock {\em Differential equations and dynamical systems}, volume~7.
	\newblock Springer Science \& Business Media, 2013.
	
	\bibitem{Plo18}
	{\L}ukasz P{\l}ociniczak.
	\newblock Hopf bifurcation in a conceptual climate model with ice-albedo and
	precipitation-temperature feedbacks.
	\newblock {\em arXiv preprint arXiv:1801.09087}, 2018.
	
	\bibitem{Sal02}
	Barry Saltzman.
	\newblock {\em Dynamical paleoclimatology: generalized theory of global climate
		change}, volume~80.
	\newblock Academic Press, 2002.
	
	\bibitem{Sel69}
	William~D Sellers.
	\newblock A global climatic model based on the energy balance of the
	earth-atmosphere system.
	\newblock {\em Journal of Applied Meteorology}, 8(3):392--400, 1969.
	
	\bibitem{Tzi06}
	Eli Tziperman, Maureen~E Raymo, Peter Huybers, and Carl Wunsch.
	\newblock Consequences of pacing the pleistocene 100 kyr ice ages by nonlinear
	phase locking to milankovitch forcing.
	\newblock {\em Paleoceanography and Paleoclimatology}, 21(4), 2006.
	
	\bibitem{Wal13}
	James Walsh and Richard McGehee.
	\newblock Modeling climate dynamically.
	\newblock {\em The College Mathematics Journal}, 44(5):350--363, 2013.
	
	\bibitem{Wal16}
	James Walsh, Esther Widiasih, Jonathan Hahn, and Richard McGehee.
	\newblock Periodic orbits for a discontinuous vector field arising from a
	conceptual model of glacial cycles.
	\newblock {\em Nonlinearity}, 29(6):1843, 2016.
	
	\bibitem{Wee76}
	Johannes Weertman.
	\newblock Milankovitch solar radiation variations and ice age ice sheet sizes.
	\newblock {\em Nature}, 261:17--20, 1976.
	
\end{thebibliography}

\end{document}